\newcommand{\CChi}{\mbox{\Large$\chi$}}
\newcommand{\R}{\mathbb{R}}
\newcommand{\N}{\mathbb{N}}
\newcommand{\ep}{\varepsilon}
\newcommand{\T}{\mathbb{T}_m}
\renewcommand{\S}{\mathcal{S}}
\newcommand{\Es}{\mathbb{E}_{S_{I},S_{II}}^{x_0}}
\newcommand{\dis}{\displaystyle}
\newcommand{\F}{\mathcal{F}}
\newtheorem{te}{Theorem}[section]
\newtheorem{lem}[te]{Lemma}
\newtheorem{pr}[te]{Proposition}
\theoremstyle{remark}
\newtheorem{re}[te]{Remark}
\newtheorem{ex}[te]{Example}
\theoremstyle{definition}
\newtheorem{de}[te]{Definition}
\title[Unique continuation for a nonlinear equation on trees]
{The unique continuation property for a nonlinear equation on trees}
\author[L. M. Del Pezzo, C. A. Mosquera and J. D. Rossi]
{Leandro M. Del Pezzo, Carolina A. Mosquera and Julio D. Rossi}
\address{Leandro M. Del Pezzo and Carolina A. Mosquera \hfill\break\indent
CONICET and Departamento  de Matem{\'a}tica, FCEyN, Universidad de Buenos Aires,
\hfill\break\indent Pabellon I, Ciudad Universitaria (1428),
Buenos Aires, Argentina.}
\email{{\tt ldpezzo@dm.uba.ar, mosquera@dm.uba.ar}}
\address{Julio D. Rossi \hfill\break\indent
Departamento  de An{\'a}lisis Matem{\'a}tico, Universidad de Alicante,
\hfill\break\indent Ap. correo 99, 03080, Alicante, SPAIN.
}
\email{{\tt julio.rossi@ua.es}}
\subjclass[2010]{35Q91, 35B51, 34A12, 31C20}
\keywords{Unique Continuation Property, Trees, p-harmonious functions, Tug-of-war game}
\thanks{
Leandro M. Del Pezzo was partially supported by ANPCyT PICT No. 2006-290 and CONICET (Argentina) PIP 5478/1438.
Carolina A. Mosquera was partially supported by UBACyT X638. Julio D. Rossi was partially supported by by projects MTM2010-18128 and MTM2011-27998 (Spain).}
\begin{document}

\begin{abstract}
In this paper we study the game $p-$Laplacian on a tree, that is,
$$
u(x)=\frac{\alpha}2\left\{\max_{y\in \S(x)}u(y) + \min_{y\in
\S(x)}u(y)\right\} + \frac{\beta}{m}\sum_{y\in \S(x)} u(y),
$$
here $x$ is a vertex of the tree and $S(x)$ is the set of
successors of $x$.
 We study the family of the subsets of the tree that enjoy the unique
continuation property, that is, subsets $U$ such that $u\mid_U=0$
implies $u \equiv 0$.
\end{abstract}

\maketitle


\section{Introduction}

Our main goal in this paper is to analyze for which sets the
unique continuation property is valid for the nonlinear equation
known as the game $p-$Laplacian on a tree. This nonlinear equation
reads as follows
\begin{equation}\label{eq.main.intro}
u(x)=\frac{\alpha}2\left\{\max_{y\in \S(x)}u(y) + \min_{y\in
\S(x)}u(y)\right\} + \frac{\beta}{m}\sum_{y\in \S(x)} u(y), \qquad
\forall x\in\T.
\end{equation}
Here $x$ is a vertex of the $m$-branches directed tree $\T$ and $\S(x)$
is the set of successors of that vertex (see Section
\ref{sect-prelim} for details).

Equation \eqref{eq.main.intro} arises naturally when one considers
Tug-of-War games. In fact, let us describe the game that gives
rise to \eqref{eq.main.intro}. This is a zero sum game with two
players in which the earnings of one of them are the losses of the
other. Starting with a token at a vertex $x_0\in\T$, the players
flip a biased coin with probabilities $\alpha$ and $\beta,$
$\alpha+\beta=1$. If the result is a head (probability $\alpha$),
they toss a fair coin to decide who move the token. If the outcome
of the second toss is heads, then Player I moves the token to any
$x_1\in
\S(x_0)$, while in case of tails, Player II moves the token
to any $x_1\in\S(x_0)$. In the other case, that is, if they get
tails in the first coin toss (probability $\beta$), the game state
moves according to the uniform probability density to a
  random vertex $x_1\in \S(x_0)$. They continue playing and
given a continuous function $F:[0,1]\to\R,$ the final payoff is
given by $
\lim_{k\to+\infty} u(x_k)=F(\pi)$. This game has a value $u$ that
  verifies a Dynamic Programming Principle formula, that for this game is
  given by \eqref{eq.main.intro}. This can be intuitively explained
  as follows: the expected value of the game is the sum among all
  possibilities of the expected value in the successors.
  Note that Player I tries to maximize the
  expected value while Player II tries to minimize it. Hence,
  there is $\alpha/2$ probability of each player to win (and hence $\alpha/2$ probability
  to move to the vertex where the maximum is located and $\alpha/2$ to the minimum) and
  $\beta$ probability of the random choice of the next point.
  Formula \eqref{eq.main.intro} encodes all these possibilities.
  See Section \ref{sect-games} for more details concerning the
  game.

Also equation \eqref{eq.main.intro} can be viewed as a combination
(with coefficients $\alpha$ and $\beta$) of the discrete infinity
Laplacian, studied in \cite{s-tree}, that is given by
$$
\frac{1}2\left\{\max_{y\in \S(x)}u(y) + \min_{y\in
\S(x)}u(y)\right\} - u(x)
$$
and the discrete Laplacian that in this case is given by $$
 \frac{1}{m}\sum_{y\in \S(x)} u(y) - u(x).$$

The study of the unique continuation property for solutions of
differential equalities and inequalities of second order elliptic
operators with smooth and non-smooth real coefficients has a large
history and is essentially complete. Let us state a classical
strong unique continuation result for the divergence-form linear
equation
\[
\mbox{div} (A(x) Du) + \langle b(x), Du\rangle + c(x)u =0.
\]

{\it Classical unique continuation property}. Let $\Omega\subset
\R^N$ be a connected domain. Under adequate assumptions on the
coefficients $A$, $b$ and $c$, if $u$ is a weak solution that
vanishes in an open subset of $\Omega$, then $u \equiv 0$ in
$\Omega$.

A general version of this statement is proved by Hormander
\cite{Horm} using Carleman estimates. See also \cite{Gar} that
contains a proof of the result via monotonicity formulas. This
result was recently generalized to fully nonlinear equations
(under some assumptions on regularity of the equation) in
\cite{silv}. For more details and references concerning unique
continuation we refer the reader to
\cite{Counter,JK,KT0,KT,Martio,Wolff}.

Concerning unique continuation for quasilinear problems  like the
$p-$Laplacian, $\mbox{div} (|\nabla u|^{p-2} \nabla u) =0$, in
\cite{Ale}, the author proves the unique continuation
property in the plane for all $1<p<+\infty$, for a different
approach see also \cite{BI,man}.
In the higher dimensions, as far as we know, the problem remains
open for $p\neq2.$ Recently, in \cite{GM1} , the authors deal with
this problem by studying a certain generalization of Almgren's
frequency function for the $p-$Laplacian. Using this approach the
authors have obtained some partial results. See also the reference
\cite{GM2}.

In the case of connected finite graph this problem can be stated
as follows: Let $E$ be a
 connected finite graph.
 We assign to every edge of $E$ length one and we define
$ d(x,y)=\inf_{x\sim y}|x\sim y|$, where $x\sim y$ is the path
connecting vertex $x$ to the vertex $y$ and $|x\sim y|$ is the
number of edges in this path. Assume that $u$ is a solution to
\eqref{eq.main.intro} on $E$ (these functions are also called
$p-$harmonious functions, see
\cite{s-tree}) and that $u=0$ on $B_R(x)$ where $B_R(x)$ is the ball of
radius $R>0$ centered at a node $x$ of $E$ contained within this
graph. Does it imply that $u\equiv 0$ on $E$? The answer to this
problem is negative, see examples in Section 3.6 of \cite{s-tree}.
Also, in \cite{s-tree}, the author proves the existence and
uniqueness and a comparison principle for the Dirichlet problem
for
\eqref{eq.main.intro} in the case of a connected finite graph and in the case where the graph is
$\mathbb{T}_3.$

\subsection{Main results} Our results can be summarized as follows: first, for a general
$m$-branches directed tree, we prove existence, uniqueness and a
comparison principle for the Dirichlet problem for
\eqref{eq.main.intro}. In addition we present an approximation scheme
that can be used to approximate numerically the solution when the
boundary data is a Lipschitz function. Next, we prove our main
result, that is a description of sets $U \subset \T$ for
which the unique continuation property holds. As we have
mentioned, this means that any bounded solution to
\eqref{eq.main.intro} that vanishes on $U$ vanishes everywhere in
$\T$.

{\bf Organization of the paper.} In Section
\ref{sect-prelim} we collect some preliminary facts concerning
trees and solutions to
\eqref{eq.main.intro}; in Section~\ref{sect-games} we describe with some details the
associated Tug-of-War game  and use it to prove existence and
uniqueness for the Dirichlet problem and a comparison principle
for solutions to
\eqref{eq.main.intro}; in Section~\ref{sect-aproximacion} we
present a numerical scheme that approximates solutions to
\eqref{eq.main.intro} and, finally, in Section~\ref{sect-unique.cont} we prove our
main result concerning the sets for which unique continuation
holds.


\section{Preliminaries} \label{sect-prelim}

\subsection{Directed Tree}
Let $m\in\mathbb{N}_{>2}$. In this work we consider a directed
tree $\T$ with regular $m-$branching, that is, $\T$ consists of
the empty set $\emptyset$ and all finite  sequences
$(a_1,a_2,\dots,a_k)$ with $k\in\N,$ whose coordinates $a_i$ are
chosen from $\{0,1,\dots,m-1\}.$ The elements in $\T$ are called
vertices. Each vertex $x$ has $m$ successors, obtained by adding
another coordinate. As we mentioned in the introduction, we will
denote by $\S(x)$ the set of successors of the vertex $x.$ A
vertex $x\in\T$ is called a $n-$level vertex ($n\in\mathbb{N}$) if
$x=(a_1,a_2,\dots,a_n).$  The set of all $n-$level vertices is
denoted by $\T^n.$

\begin{ex}
  Let $\kappa\in\mathbb{N}_{\ge3}.$
  The $\nicefrac{1}{\kappa}-$Cantor set, that we denote by
  $C_{\nicefrac{1}{\kappa}}$,
is the set of all $x\in[0,1]$ that have a base $\kappa$ expansion
without the digit $1$,
  that is $x=\sum a_j\kappa^{-j}$ with
  $a_j\in\{0,1,\dots,\kappa-1\}$ with $a_j \neq 1$. Thus $C_{\nicefrac{1}{\kappa}}$
  is obtained from $[0,1]$ by removing the second $\kappa-$th part of the line segment $[0,
  1]$, and then removing the second interval of length $\nicefrac1{\kappa}$
  from the remaining intervals, and so on. This set
  can be thought of as a directed tree with regular $m-$branching with $m=\kappa-1$.

  For example, if $\kappa=3$, we identify $[0, 1]$ with $\emptyset,$
  the sequence $(\emptyset, 0)$ with the first interval right $[0,
  \nicefrac13]$,
  the sequence $(\emptyset, 1)$ with the central interval $[\nicefrac13,
  \nicefrac23]$ (that is removed),
  the sequence $(\emptyset, 2)$ with the left interval  $[\nicefrac23, 1],$
  the sequence $(\emptyset, 0 ,0)$ with the interval
  $[0, \nicefrac{1}{9}]$ and so on.
\begin{center}
\begin{tikzpicture} [font=\footnotesize,
grow=down, 
level 1/.style={->, sibling distance=12em},
level 2/.style={->,sibling distance=4em}, level distance=1cm,
level 3/.style={->,sibling distance=1em}, level distance=1cm]

    \node {$\emptyset$}
        child { node {0}
						child { node{0}
										child { node{0}}
										child { node{1}}
                        					child { node{2}}
								}
						child { node{1}
										child { node{0}}
										child { node{1}}
                        					child { node{2}}
								}
                        					child { node{2}
										child { node{0}}
										child { node{1}}
                        					child { node{2}}
								}
				}
       child { node {1}
						child { node{0}
										child { node{0}}
										child { node{1}}
                        					child { node{2}}
								}
						child { node{1}
										child { node{0}}
										child { node{1}}
                        					child { node{2}}
								}
                        					child { node{2}
										child { node{0}}
										child { node{1}}
                        					child { node{2}}
								}
				}
 child { node {2}
						child { node{0}
										child { node{0}}
										child { node{1}}
                        					child { node{2}}
								}
						child { node{1}
										child { node{0}}
										child { node{1}}
                        					child { node{2}}
								}
                        					child { node{2}
										child { node{0}}
										child { node{1}}
                        					child { node{2}}
								}
				}
    ;
\end{tikzpicture}
\end{center}
\end{ex}

\medskip

A branch of $\T$ is an infinite sequence of vertices, each followed by its immediate successor.
The collection of all branches forms the boundary  $\partial\T$ of $\T.$

\medskip

We now define a metric on $\T\cup \partial\T.$ The distance
between two sequences (finite or infinite) $\pi=(a_1,\dots,
a_k,\dots)$ and $\pi'=(a_1',\dots, a_k',\dots)$ is $m^{-K+1}$ when
$K$ is the first index $k$ such that $a_k\neq a_k';$ but when
$\pi=(a_1,\dots, a_K)$ and $\pi'=(a_1,\dots, a_K,
a_{K+1}',\dots),$ the distance is $m^{-K}.$  Hausdorff measure and
Hausdorff dimension are defined using this metric. We can observe
that $\T$ and $\partial\T$ have diameter one and $\partial\T$ has
Hausdorff dimension one. Now, we observe that the mapping
$\psi:\partial\T\to[0,1]$ defined as
\[
\psi(\pi):=\sum_{k=1}^{+\infty} \frac{a_k}{m^{k}}
\]
is surjective, where $\pi=(a_1,\dots, a_k,\dots)\in\partial\T$ and
$a_k\in\{0,1,\dots,m-1\}$ for all $k\in\mathbb{N}.$ Whenever
$x=(a_1,a_2,\dots,a_k)$ is a vertex, we set
\[
 \psi(x):=\psi(a_1,a_2,\dots,a_k,0,\dots,0,\dots).
\]
We can also associate to a vertex $x=(a_1,a_2,\dots,a_k)$ an
interval $I_x$ of length $\frac{1}{m^k}$ as follows
\[
 I_x=\left[\psi(x),\psi(x)+\frac1{m^k}\right].
\]
Observe that for all $x\in \T$, $I_x \cap \partial\T$ is the
subset of $\partial\T$ consisting of all branches that start at
$x$.

With an abuse of notation, we will write
$\pi=(x_1,\dots,x_k,\dots)$
instead of $\pi=(a_1,\dots,a_k,\dots)$ where $x_1=a_1$ and
$x_k=(a_1,\dots,a_k)\in\S(x_{k-1})$ for all $k\in\N_{\ge2}.$

\subsection{$p-$harmonious functions} Inspired in \cite{s-tree} and \cite{MPR3}
we give the definition of the $p-$harmonious function that we will
consider throughout this paper.

\begin{de} Let $\alpha,\beta>0$ such that $\alpha+\beta=1$.
A function $u:\T\to\R$ is called $p-$subharmonious if
\begin{equation*}
u(x)\le\frac{\alpha}2\left\{\max_{y\in \S(x)}u(y) + \min_{y\in \S(x)}u(y)\right\}
+ \frac{\beta}{m}\sum_{y\in \S(x)} u(y) \quad \forall x\in\T,
\end{equation*}
and $p-$superharmonious if the opposite inequality holds for all
$x\in\T$. We say that $u$ is $p-$harmonious if $u$ is both
$p-$subharmonious  and $p-$superharmonious.
\end{de}

\begin{re}
If $u$ is a $p-$harmonious function on $\T,$ then
$u^+=\max\{u,0\}$ and $u^-=\max\{-u,0\}$ are $p-$subharmonious
functions on $\T.$
\end{re}

Next, we collect some properties of $p-$harmonious functions.

\begin{lem}If $u$ is a $p-$subharmonious function bounded above on $\T$ and
there exists $x\in\T$ such that $u(x)=\max_{y\in\T} u(y)$ then
$u(y)=u(x)$ for any $y\in\T$ such that $I_y\subset I_x.$
\end{lem}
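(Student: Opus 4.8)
The plan is to run a discrete strong maximum principle at the vertex $x$ and then propagate the conclusion down the subtree rooted at $x$ by induction on the level. Write $M:=\max_{y\in\T}u(y)=u(x)$, and recall that the condition $I_y\subset I_x$ means exactly that $y$ lies in the subtree rooted at $x$, i.e.\ $y$ is either $x$ itself or one of its iterated successors. Since $u$ is bounded above and attains $M$ at $x$, we have $u(z)\le M$ for every vertex $z\in\T$.

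First I would show that every immediate successor of $x$ already equals $M$. Evaluating the $p$-subharmonicity inequality at $x$ gives
\[
M=u(x)\le \frac{\alpha}{2}\Big\{\max_{y\in\S(x)}u(y)+\min_{y\in\S(x)}u(y)\Big\}+\frac{\beta}{m}\sum_{y\in\S(x)}u(y).
\]
Now I bound each of $\max_{y\in\S(x)}u(y)$, $\min_{y\in\S(x)}u(y)$ and $\frac{1}{m}\sum_{y\in\S(x)}u(y)$ by $M$. Because the weights $\frac{\alpha}{2},\frac{\alpha}{2},\beta$ add up to $\alpha+\beta=1$, the right-hand side is itself $\le M$; hence it must equal $M$, and since the weights are strictly positive, each of the three quantities equals $M$ as well. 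In particular $\min_{y\in\S(x)}u(y)=M$, which together with $u(y)\le M$ for all $y\in\S(x)$ forces $u(y)=M$ for every $y\in\S(x)$. (Equivalently one may use $\frac{1}{m}\sum_{y\in\S(x)}u(y)=M$ with $u(y)\le M$ and the strict positivity of $\beta$.)

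The point just established is self-reproducing: each successor $y$ of $x$ again realizes the global maximum $M$, so the same computation applies verbatim at $y$ and yields $u\equiv M$ on $\S(y)$. I would therefore conclude by induction on the level. Let $A_n$ denote the set of $n$-th generation descendants of $x$, so that $A_0=\{x\}$; the base case is the hypothesis $u(x)=M$, and the inductive step is exactly the argument of the previous paragraph applied to each $y\in A_n$, giving $u\equiv M$ on $A_{n+1}$. Since every $y$ with $I_y\subset I_x$ belongs to some $A_n$, we obtain $u(y)=M=u(x)$ for all such $y$, which is the claim.

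The argument has essentially no obstacle: it is a discrete maximum principle whose only delicate point is extracting pointwise equality on all successors from a single scalar inequality. That is where the structure of the operator is used—one must exploit that the coefficients $\frac{\alpha}{2}$ and $\beta$ are strictly positive (so that either the minimum term or the averaging term carries positive weight), for otherwise a successor could dip strictly below $M$ without violating the inequality. The remaining ingredient is only the bookkeeping identifying $I_y\subset I_x$ with membership in the subtree rooted at $x$.
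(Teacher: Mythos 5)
Your proposal is correct and follows essentially the same argument as the paper: evaluate the $p$-subharmonicity inequality at the maximum point, use that the right-hand side is a convex combination of quantities bounded by $M$ (with strictly positive weights, since $\alpha,\beta>0$) to force $\min_{y\in\S(x)}u(y)=M$, and then propagate down the subtree by induction. The paper phrases the key step as a rearrangement isolating the minimum term rather than as equality in a convex combination, but this is only a cosmetic difference.
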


\begin{proof} Throughout this proof let $M=u(x)=\max_{y\in\T} u(y)$. We first
observe that it is sufficient to show that $u(y)=M$ for all
$y\in\S(x).$ Since $u$ is $p-$subharmonious on $\T,$ we have that
\begin{align*}
M=u(x)&\le\frac{\alpha}2\left\{\max_{y\in \S(x)}u(y) + \min_{y\in \S(x)}u(y)\right\}
+ \frac{\beta}{m}\sum_{y\in \S(x)} u(y)\\
&\le\left(\frac{\alpha}2+\frac{(m-1)\beta}m \right)M +\left(\frac{\alpha}2
+\frac{\beta}{m}\right) \min_{y\in \S(x)}u(y).
\end{align*}
Then
\[
\left(\frac{\alpha}2+\frac{\beta}{m}\right) M\le\left(\frac{\alpha}2
+\frac{\beta}{m}\right) \min_{y\in \S(x)}u(y).
\]
Therefore $u(y)=u(x)$ for all $y\in\S(x).$
\end{proof}

In the same manner, we can prove the following lemma

\begin{lem}If $u$ is a $p-$superharmonious function bounded below
on $\T$ and there exists $x\in\T$ such that $u(x)=\min_{y\in\T}
u(y),$ then $u(y)=u(x)$ for any $y\in\T$ such that $I_y\subset
I_x.$
\end{lem}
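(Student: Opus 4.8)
The plan is to mirror the argument of the preceding lemma, exchanging the roles of maximum and minimum and reversing every inequality. Write $\mu=u(x)=\min_{y\in\T}u(y)$. First I would reduce the claim to its one-step version: it suffices to prove that $u(y)=\mu$ for every immediate successor $y\in\S(x)$. Indeed, once every $y\in\S(x)$ attains the global minimum $\mu$, each such $y$ is again a minimizing vertex, so the one-step statement applies to $y$ and forces $u(z)=\mu$ for all $z\in\S(y)$; iterating down the tree gives $u(y)=\mu$ for every descendant of $x$, and these are precisely the vertices $y$ with $I_y\subset I_x$.

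For the one-step statement, I would test the $p-$superharmonious inequality at the minimizing vertex $x$:
\[
\mu=u(x)\ge\frac{\alpha}2\left\{\max_{y\in\S(x)}u(y)+\min_{y\in\S(x)}u(y)\right\}+\frac{\beta}{m}\sum_{y\in\S(x)}u(y).
\]
Since $\mu$ is the global minimum we have $\min_{y\in\S(x)}u(y)\ge\mu$, and in the sum of the $m$ successor values I would isolate the maximal one and bound the remaining $m-1$ terms from below by $\mu$, obtaining $\sum_{y\in\S(x)}u(y)\ge(m-1)\mu+\max_{y\in\S(x)}u(y)$. Substituting these lower bounds yields
\[
\mu\ge\left(\frac{\alpha}2+\frac{(m-1)\beta}m\right)\mu+\left(\frac{\alpha}2+\frac{\beta}m\right)\max_{y\in\S(x)}u(y).
\]

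Finally I would simplify the coefficient of $\mu$ using $\alpha+\beta=1$, since $1-\frac{\alpha}2-\frac{(m-1)\beta}m=\frac{\alpha}2+\frac{\beta}m$; this turns the displayed inequality into
\[
\left(\frac{\alpha}2+\frac{\beta}m\right)\mu\ge\left(\frac{\alpha}2+\frac{\beta}m\right)\max_{y\in\S(x)}u(y).
\]
As $\frac{\alpha}2+\frac{\beta}m>0$, this forces $\max_{y\in\S(x)}u(y)\le\mu$, and combined with $u(y)\ge\mu$ for all $y$ it gives $u(y)=\mu$ on $\S(x)$, which is exactly the one-step claim. I expect no genuine obstacle here beyond bookkeeping: the only point deserving a little care is the reduction step, where one checks that the minimum really propagates to the successors so that the induction down the branches issuing from $x$ closes and covers precisely the set $\{y:I_y\subset I_x\}$.
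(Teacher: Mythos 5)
Your proof is correct and is exactly the argument the paper intends: the paper proves the analogous maximum lemma by bounding the sum over successors and isolating $\left(\frac{\alpha}{2}+\frac{\beta}{m}\right)$, then states this minimum version follows ``in the same manner,'' which is precisely your mirrored computation with the inequalities reversed. Your explicit treatment of the reduction step (each successor becomes a new global minimizer, so the claim propagates down to all $y$ with $I_y\subset I_x$) is a faithful elaboration of what the paper leaves implicit.
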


Now we show that $p-$harmonious functions are well behaved with
respect to uniform convergence.

\begin{lem}\label{unifconv} The uniform limit of a sequence of
$p-$harmonious functions is a $p-$harmonious function.
\end{lem}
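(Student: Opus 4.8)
The plan is to pass to the limit directly in the defining identity for $p$-harmonious functions, exploiting the fact that each vertex $x\in\T$ has only finitely many (exactly $m$) successors. Let $\{u_n\}$ be a sequence of $p$-harmonious functions on $\T$ converging uniformly to a function $u$, and fix a vertex $x\in\T$. For each $n$ I start from the identity
\[
u_n(x)=\frac{\alpha}2\left\{\max_{y\in \S(x)}u_n(y) + \min_{y\in \S(x)}u_n(y)\right\}+ \frac{\beta}{m}\sum_{y\in \S(x)} u_n(y),
\]
and let $n\to+\infty$ term by term.

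The key observation is that the three operations appearing on the right-hand side --- taking the maximum, the minimum, and the normalized sum over the finite set $\S(x)$ --- are all continuous with respect to uniform convergence. Concretely, since $\S(x)$ has exactly $m$ elements, one has the elementary estimates
\[
\Big|\max_{y\in \S(x)}u_n(y)-\max_{y\in \S(x)}u(y)\Big|\le \max_{y\in \S(x)}|u_n(y)-u(y)|\le \|u_n-u\|_\infty,
\]
the analogous bound for the minimum, and
\[
\Big|\frac1m\sum_{y\in \S(x)} u_n(y)-\frac1m\sum_{y\in \S(x)} u(y)\Big|\le \frac1m\sum_{y\in \S(x)}|u_n(y)-u(y)|\le \|u_n-u\|_\infty.
\]
Each of these upper bounds tends to $0$ as $n\to+\infty$, and of course $u_n(x)\to u(x)$. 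Passing to the limit in the displayed identity therefore shows that $u$ satisfies the same identity at $x$; since $x\in\T$ was arbitrary, $u$ is $p$-harmonious.

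I do not expect any genuine obstacle here: the whole point is that the nonlinearity is assembled from $\max$, $\min$ and a finite average over the finite successor set $\S(x)$, so passing to the limit reduces to the standard fact that $\max$ and $\min$ are $1$-Lipschitz for the sup norm. (In fact, pointwise convergence at the $m+1$ vertices $\{x\}\cup\S(x)$ would already suffice for each fixed $x$, but uniform convergence is what is available and keeps the statement clean.) The only point to keep in mind is to invoke the inequality $|\max f-\max g|\le\|f-g\|_\infty$ directly, rather than trying to compare the maximizers of $u_n$ and of $u$, which need not coincide.
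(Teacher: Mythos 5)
Your proof is correct and is essentially the same as the paper's: both pass to the limit in the defining identity at each fixed vertex, using that the right-hand side operator is non-expansive under uniform convergence. The paper phrases this via the sandwich $u_n(x)-\ep\le \frac{\alpha}2\left\{\max_{y\in \S(x)}u(y)+\min_{y\in \S(x)}u(y)\right\}+\frac{\beta}{m}\sum_{y\in \S(x)}u(y)\le u_n(x)+\ep$ obtained from monotonicity, while you phrase it via the $1$-Lipschitz estimates for $\max$, $\min$ and the average; this difference is purely presentational, and your side remark that pointwise convergence would already suffice is also correct.
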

\begin{proof}
Let $\{u_n\}_{n\in\N}$ be a sequence of $p-$harmonious functions
which converges uniformly to $u.$ We will show that $u$ is a
$p-$harmonious function. Given $\ep>0,$ there exists
$n_0=n_0(\ep)$ such that if $n\ge n_0,$
\begin{equation}\label{limite}
 |u(x)-u_{n}(x)|\le \ep \quad \forall x\in\T.
\end{equation}
Then, for all $x\in\T$ and $n\ge n_0$ we have that
\[
u_n(y)-\ep\le u(y)\le u_n(y)+\ep\quad \forall y\in\S(x).
\]
Thus, for all $x\in\T$ and $n\ge n_0,$
\begin{align*}
u_n(x)-\ep&=\frac{\alpha}2\left\{\max_{y\in \S(x)}u_n(y)
+ \min_{y\in \S(x)}u_n(y)\right\} + \frac{\beta}{m}\sum_{y\in \S(x)} u_n(y)-\ep\\
&\le \frac{\alpha}2\left\{\max_{y\in \S(x)}u(y) + \min_{y\in \S(x)}u(y)\right\}
+ \frac{\beta}{m}\sum_{y\in \S(x)} u(y)\\
&\le \frac{\alpha}2\left\{\max_{y\in \S(x)}u_n(y) + \min_{y\in \S(x)}u_n(y)\right\}
+ \frac{\beta}{m}\sum_{y\in \S(x)} u_n(y)+\ep\\
&=u_n(x)+\ep.
\end{align*}
Taking limit as $n\to+\infty,$ we get that
\[
u(x)-\ep
\le \frac{\alpha}2\left\{\max_{y\in \S(x)}u(y) + \min_{y\in \S(x)}u(y)\right\}
+ \frac{\beta}{m}\sum_{y\in \S(x)} u(y)
\le u(x)+\ep \quad\forall x\in\T.
\]
Then, since $\ep$ is arbitrary, we have that
\[
u(x)
=\frac{\alpha}2\left\{\max_{y\in \S(x)}u(y) + \min_{y\in \S(x)}u(y)\right\}
+ \frac{\beta}{m}\sum_{y\in \S(x)} u(y)
\quad\forall x\in\T,
\]
that is, $u$ is a $p-$harmonious function.
\end{proof}

The Fatou set $\F(u)$ of a function $u$ is the set of the branches
$\pi=(x_1,\dots, x_k, \dots)$ on which
\[
\lim_{k\to+\infty}u(x_k)
\]
 exists and is finite, and $BV(u)$ is the set of the branches
 $\pi=(x_1,\dots, x_k, \dots)$ on which $u$ has finite variation
 \[
 \sum_{k=1}^{\infty}|u(x_{k+1})-u(x_k)|.
 \]
Clearly $BV(u)\subseteq\F(u).$

Now we use the results of \cite{KLW} to show that the infimum of
Hausdorff dimension of $BV(u)$ and $\F(u)$ are equal over all
bounded $p-$harmonious functions on $\T.$

\begin{te}
Let $\mathcal{H}^m$ be the set of bounded $p-$harmonious functions
on $\T.$ Then
\begin{equation}\label{Fatou}
\min_{\mathcal{H}^m}{\rm dim}\, \F(u)=\min_{\mathcal{H}^m}{\rm dim}\, BV(u)
=\frac{
\log\left(\gamma^{-\frac{m\alpha+2(m-1)\beta}{2m}}+(m-1)\gamma^{\frac{m\alpha+2\beta}{2m}}\right)}{\log m},
\end{equation}
where $$\gamma=\frac{m\alpha+2(m-1)\beta}{(m-1)(m\alpha+2\beta)}$$
and ${\rm dim}$ denotes the usual Hausdorff dimension.
\end{te}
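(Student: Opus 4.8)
The plan is to reduce \eqref{Fatou} to the computation of the Hausdorff dimension of one explicit self-similar measure on $\partial\T$, and to import from \cite{KLW} the two estimates that pin the extremal configuration down. The guiding observation is that the operator on the right of \eqref{eq.main.intro} is \emph{saturated} by the configuration in which, at each vertex $x$, a single distinguished successor $y_0\in\S(x)$ realizes the maximum while the remaining $m-1$ successors share a common (minimal) value $v$: in that case \eqref{eq.main.intro} reduces to $u(x)=q\,u(y_0)+(1-q)\,v$, where
\[
q:=\frac{m\alpha+2\beta}{2m}=\frac{\alpha}{2}+\frac{\beta}{m},\qquad 1-q=\frac{m\alpha+2(m-1)\beta}{2m}.
\]
Thus along the saturated configuration $u$ is a martingale for the transition measure $\mu$ on $\T$ that assigns mass $q$ to $y_0$ and mass $\frac{1-q}{m-1}$ to each of the other successors; since $0<q<1$, $\mu$ is a probability measure, and $\F(u)$ and $BV(u)$ become the convergence set and the finite-variation set of the bounded martingale $(u(x_k))_k$.

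First I would record the inclusion $BV(u)\subseteq\F(u)$, already noted in the text, which gives $\min_{\mathcal H^m}\dim BV\le\min_{\mathcal H^m}\dim\F$. The proof then rests on the two facts provided by \cite{KLW}: a \emph{lower bound}, that every bounded $p-$harmonious function satisfies $\dim BV(u)\ge D$, where $D$ is the right-hand side of \eqref{Fatou}; and an \emph{upper bound}, that there is one bounded $p-$harmonious function $u^\ast$, built from the saturated martingale above, with $\dim\F(u^\ast)\le D$. Chaining these with the inclusion yields
\[
D\le\min_{\mathcal H^m}\dim BV\le\min_{\mathcal H^m}\dim\F\le\dim\F(u^\ast)\le D,
\]
so both minima equal $D$ and are attained.

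It then remains to identify $D$ with $\dim\mu$. A level-$k$ vertex determines a subset of $\partial\T$ of diameter $m^{-k}$ in the metric of Section~\ref{sect-prelim}, so the standard computation for a Bernoulli measure gives $\dim\mu=H(\mu)/\log m$ with one-step entropy $H(\mu)=-q\log q-(1-q)\log\frac{1-q}{m-1}$; hence $\dim\mu=\log_m\!\big((m-1)^{1-q}q^{-q}(1-q)^{-(1-q)}\big)$. Finally, with $\gamma=\frac{1-q}{(m-1)q}$ (which is exactly the $\gamma$ of the statement) and using $q+(1-q)=1$, one verifies the elementary identity
\[
\gamma^{-(1-q)}+(m-1)\gamma^{q}=\frac{(m-1)^{1-q}}{q^{q}(1-q)^{1-q}},
\]
so that $\dim\mu$ coincides with the right-hand side of \eqref{Fatou}.

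The whole difficulty is concentrated in the two estimates quoted from \cite{KLW}, and both are forced by the nonlinearity of \eqref{eq.main.intro}: a generic bounded $p-$harmonious function is not a martingale for any fixed measure, because the successor realizing the maximum depends on $u$ itself. The lower bound asserts that no admissible configuration beats the saturated one, i.e. the increments $|u(x_{k+1})-u(x_k)|$ cannot be summable on a set of dimension exceeding $D$. The upper bound is the dual and more delicate task: the boundary datum of $u^\ast$ must be chosen so that the saturated equation $u^\ast(x)=q\,u^\ast(y_0)+(1-q)v$ holds at \emph{every} vertex (so that $u^\ast$ is genuinely $p-$harmonious) while the martingale $(u^\ast(x_k))_k$ \emph{diverges} off a $\mu$-typical set; since bounded martingales converge $\mu$-almost everywhere, the work is to force divergence on the complement so that $\F(u^\ast)$ does not exceed $\dim\mu$. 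Granting these two inputs, the theorem follows from the displayed chain of inequalities together with the entropy computation.
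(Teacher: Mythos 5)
Your proposal takes essentially the same route as the paper: both outsource the measure/dimension theory to \cite{KLW} and reduce the theorem to identifying an extremal value, and your computational content is correct. The saturated configuration (one maximal successor, the other $m-1$ sharing the minimal value) is indeed the extremal one; with $q=\frac{\alpha}{2}+\frac{\beta}{m}$ your identity
\[
\gamma^{-(1-q)}+(m-1)\gamma^{q}=\frac{(m-1)^{1-q}}{q^{q}(1-q)^{1-q}},
\qquad \gamma=\frac{1-q}{(m-1)q},
\]
is exactly the value the paper computes for its minimizer, so your entropy expression does equal the right-hand side of \eqref{Fatou}.

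The genuine gap is in what you attribute to \cite{KLW}. What that reference supplies --- and what the paper quotes as Theorem A --- is the variational identity
\[
\min_{\mathcal{H}^m}{\rm dim}\,\F(u)=\min_{\mathcal{H}^m}{\rm dim}\,BV(u)=\frac{\log f(m)}{\log m},
\qquad
f(m)=\min\left\{\sum_{j=1}^m e^{x_j}\colon \frac{\alpha}{2}\left(\max_{1\le j\le m} x_j+\min_{1\le j\le m}x_j\right)+\frac{\beta}{m}\sum_{j=1}^m x_j=0\right\},
\]
valid for a general averaging operator; it does not contain your two estimates with the explicit constant $D$, because evaluating $f(m)$ for this particular operator is precisely what the present theorem does. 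Granting Theorem A, your entropy computation evaluates the objective $\sum_j e^{x_j}$ at the saturated configuration and therefore proves only $f(m)\le m^{D}$, i.e. $\min_{\mathcal{H}^m}{\rm dim}\,\F(u)\le D$. The opposite inequality --- your ``lower bound'' ${\rm dim}\, BV(u)\ge D$ for every bounded $p$-harmonious $u$ --- is equivalent to the statement that no configuration satisfying the constraint has $\sum_j e^{x_j}<m^{D}$, i.e. that the saturated configuration is the \emph{global minimizer} of this finite-dimensional constrained problem. You name this step (``no admissible configuration beats the saturated one'') but never prove it; you defer it to \cite{KLW}, where it is not established for this operator. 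That optimization is the one piece of work in the paper's proof beyond the citation (the paper exhibits the minimizer $x_1=-\frac{m\alpha+2(m-1)\beta}{2m}\log\gamma$, $x_j=\frac{m\alpha+2\beta}{2m}\log\gamma$ for $2\le j\le m$, whose value is the displayed quantity above), and without it your argument yields only one of the two inequalities needed.
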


\begin{proof}
By Theorem A in \cite{KLW} we have that
\[
\min_{\mathcal{H}^m}\mbox{ dim } \F(u)=\min_{\mathcal{H}^m}\mbox{ dim } BV(u)
=\frac{\log f(m)}{\log m },
\]
where
\[
f(m)=\min\left\{\sum_{j=1}^m e^{x_j}\colon x\in\mathbb{R}^m \mbox{ s. t.} \frac{\alpha}{2}
\left(\max_{1\le j\le m} x_j +\min_{1\le j\le m}x_j\right) +\frac{\beta}{m}\sum_{j=1}^m x_j=0
\right\}.
\]

We obseve that the minimum $f(m)$ is attained at
\[
x_1=-\frac{\alpha m + 2(m-1)\beta}{2m}\log\gamma,\qquad x_j=
\frac{m\alpha+2\beta}{2m}\log\gamma, \qquad 2\le j\le m,
\]
with value
\[
\gamma^{-\frac{m\alpha+2(m-1)\beta}{2m}}+(m-1)\gamma^{\frac{m\alpha+2\beta}{2m}},
\]
which completes the proof.
\end{proof}

\begin{re}
In \cite{KW}, for the classical discretization of the
$p-$harmonic function on trees,
$$
\sum_{y \in S(x)} |u(x)-u(y)|^{p-2} (u(x)-u(y)) =0,
$$
 the authors prove that
$$\lim_{m\to+\infty}
\min_{\mathcal{H}^m}{\rm dim}\, \F(u)=\lim_{m\to+\infty}\min_{\mathcal{H}^m}{\rm
dim}\, BV(u)=1 $$ for all $p>1$. In our case, we can observe that,
when $\alpha=0,$ we have that $\gamma=1$ and therefore
\[
\min_{\mathcal{H}^m}{\rm dim}\, \F(u)=\min_{\mathcal{H}^m}{\rm dim}\, BV(u)=1
\]
for all $m\in\N_{\ge2}.$ On the other hand, when $\alpha\neq0,$ if
we rewrite \eqref{Fatou} as
$$
\left(\frac{1}{2}+\frac{(m-2)\beta}{2m}\right)\left(1
+\frac{\log\left(\frac{(1-\frac{1}{m})(\alpha+\frac{2\beta}{m})}
{(\alpha+\frac{2(m-1)\beta}{m})}\right)}{\log(m)}\right)+
\frac{\log\left(\frac{2}{\left(\alpha+\frac{2\beta}{m}\right)}\right)}{\log(m)}
$$
and take limit as $m\to+\infty,$ we obtain that
\[
\lim_{m\to+\infty}\min_{\mathcal{H}^m}{\rm dim}\, \F(u)=\lim_{m\to+\infty}
\min_{\mathcal{H}^m}{\rm dim}\, BV(u)= \frac{1}{2}+\frac{\beta}{2}.
\]
\end{re}

\section{The Dirichlet Problem and a Tug-of-War Game} \label{sect-games}

First, let us introduce what we understand by the Dirichlet
problem for $p-$harmonious functions.

\noindent{\bf Dirichlet Problem}
$(DP)$. Given $\alpha,\beta>0$ such that $\alpha+\beta=1$ and a
continuous function $F:[0,1]\to\R,$ find a $p-$harmonious function
$u$ such that
\[
\lim_{k\to+\infty} u(x_k)=F(\pi) \quad \forall \pi=(x_1,\dots,x_k,\dots)\in\partial\T.
\]
We say that $v$ is a supersolution of $(DP)$ if $v$ is
$p-$superharmonious and
\[
\lim_{k\to+\infty} v(x_k)\ge F(\pi) \quad \forall \pi=(x_1,\dots,x_k,\dots)\in\partial\T.
\]
We say that $v$ is a subsolution of $(DP)$ if $v$ is
$p-$subharmonious and
\[
\lim_{k\to+\infty} v(x_k)\le F(\pi) \quad \forall \pi=(x_1,\dots,x_k,\dots)\in\partial\T.
\]

First, we want to show that the $(DP)$ has a unique solution. To
this end we use the Tug-of-War game introduced in \cite{PSSW}, see
also
\cite{MPR3}. Now we describe the game and refer to \cite{ms1} for
more details and references. It is a two player zero sum game.
Starting with a token at a vertex $x_0\in\T$, the players flip a
biased coin with probabilities $\alpha$ of getting a head and
$\beta$ of a tail, $\alpha+\beta=1$. If they get a head
(probability $\alpha$), they toss a second coin (a fair coin this
time with probabilities $1/2$ and $1/2$) to decide who move the
token. If the outcome of the second toss is heads, then Player I
moves the token to any $x_1\in
\S(x_0)$. In the case of tails, Player II gets to move the token
to any $x_1\in\S(x_0)$. In the other case, that is, if they get
tails in the first coin toss (probability $\beta$), the game state
moves according to the uniform probability density to a
  random vertex $x_1\in \S(x_0)$.
They continue playing the game forever, generating an infinite
sequence $\pi=(x_0,x_1,\dots,x_k,\dots)$ where $x_k\in\S(x_{k-1})$
for any $k\in\mathbb{N},$ therefore $\pi\in\partial\T.$ Then
Player~I receive from  Player~II the amount $F(\pi)$, where $F$ is
a continuous function from $[0,1]$ to $\R$. This is the reason why
we will refer to $F$ as the final payoff function. Now we define
the expected payoff for an individual game. First, a strategy
$S_I$ for Player I is a collection of measurable mappings $S_I=
\{S_I^{k}\}_{k\in\N}$ such that the next game position is given by
\[
 S_I^{k+1}(x_0, x_1, \dots, x_k)= x_{k+1}\in \S(x_k)
\]
if Player I wins the toss given a partial history $(x_0, x_1,
\dots, x_k).$ Similarly, Player II plays according to a strategy $S_{II}.$
We can observe that the next game position $x_{k+1}\in S(x_k),$
given a partial history $(x_0, \ldots, x_k),$ is distributed
according to the probability
\[
q_{S_I, S_{II}}(x_0, \ldots, x_k, A)= \frac{\alpha}{2}
\delta_{S_I^k(x_0, \ldots, x_k)}(A) + \frac{\alpha}{2}\delta_{S_{II}^k(x_0, \ldots, x_k)}(A)
+  \frac{\beta }{m}\#(A\cap S(x_k)),
\]
where $A$ is a subset of $\T$ and $\#(A\cap S(x_k))$ denotes the
cardinal of the set $A\cap S(x_k).$ Strategies $S_I$ and $S_{II}$
together with an initial state $x_0$ determine a unique
\mbox{probability} measure $\mathbb{P}_{S_I, S_{II}}^{x_0}$ in
$[0, 1].$ For the precise definition of  $\mathbb{P}_{S_I,
S_{II}}^{x_0}$ we refer to
\cite{MPR2}. We define the expected payoff of an individual game
as
\[
 \Es[F]= \int_0^1 F(y)\, \mathbb{P}_{S_I, S_{II}}^{x_0}(dy).
\]
We also define the value of the game for Player I as
\[
 u_I(x_0)= \sup_{S_I}\inf_{S_{II}}\Es[F]
\]
and the value of the game for Player II as
\[
 u_{II}(x_0)= \inf_{S_{II}}\sup_{S_{I}}\Es[F].
\]
The value $u_I(x_0)$ and $u_{II}(x_0)$ are in a sense the best
expected outcomes each player can almost guarantee when the game
starts at $x_0.$ For more details on values of games, we refer to
\cite{msb, s-tree}.

The following theorem states that the game has a value, i.e.
$u_I=u_{II},$ and this value is a solution of $(DP)$.  For a
detailed proof of the existence of a value see \cite{ms1} and, by
an argument completely similar to the proof of Theorem 3.4 in
\cite{MPR2}, we have that the game value is a solution of $(DP)$.

\begin{te}\label{valor}
Let $F:[0,1]\to\R$ be a continuous function. Then the game with
payoff function $F$ has a value $u.$ Furthermore, $u$ is a
solution of $(DP)$ with boundary data $F$.
\end{te}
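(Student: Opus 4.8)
The plan is to prove the two assertions separately: that the value functions $u_I$ and $u_{II}$ coincide, and that their common value is $p$-harmonious and attains the datum $F$ in the sense of $(DP)$. I would begin with a purely geometric estimate that is independent of the strategies. If the game starts at a vertex $x\in\T^k$, then every play $\pi=(x,x_{k+1},\dots)$ is a branch through $x$, so $\psi(\pi)\in I_x$; since $F$ is continuous on the compact set $[0,1]$ it is bounded and uniformly continuous, whence for \emph{any} pair $S_I,S_{II}$ the payoff satisfies $\Es[F]\in[\inf_{I_x}F,\sup_{I_x}F]$, the support of $\mathbb{P}^{x_0}_{S_I,S_{II}}$ lying in $I_x$. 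Taking suprema and infima over strategies gives $u_I(x),u_{II}(x)\in[\inf_{I_x}F,\sup_{I_x}F]$. Because $|I_{x_k}|=m^{-k}\to0$ along any branch $\pi$, uniform continuity of $F$ forces $\lim_k u_I(x_k)=\lim_k u_{II}(x_k)=F(\pi)$. This single step shows both value functions are bounded and that both satisfy the boundary condition of $(DP)$.

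Next I would prove that $u_I$, and symmetrically $u_{II}$, satisfies the dynamic programming principle, that is, is $p$-harmonious:
\[
u_I(x)=\frac{\alpha}2\left\{\max_{y\in\S(x)}u_I(y)+\min_{y\in\S(x)}u_I(y)\right\}+\frac{\beta}m\sum_{y\in\S(x)}u_I(y).
\]
Conditioning on the outcome of the two coin tosses and on the first move, the one-step decomposition of the measure through the transition kernel $q_{S_I,S_{II}}$ yields this identity after each player optimizes his first move. The ``$\ge$'' inequality follows from Player~I committing to a near-optimal first move and then playing $\ep$-optimally from each successor; the ``$\le$'' from bounding an arbitrary strategy by the optimal continuation from each possible successor, via the tower property. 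Handling the measurability of the families $\{S_I^k\}_{k\in\N}$ is the delicate point here.

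The inequality $u_I\le u_{II}$ is the general minimax inequality $\sup\inf\le\inf\sup$. For the reverse inequality I would run a martingale (strategy-stealing) argument. Given $\ep>0$, let Player~II play the strategy that from $x_k$ moves to a successor $y$ with $u_I(y)\le\min_{z\in\S(x_k)}u_I(z)+\ep\,2^{-k}$, while Player~I plays arbitrarily. Using the dynamic programming identity for $u_I$, the process $M_k=u_I(x_k)-\ep(1-2^{-k})$ is a supermartingale with respect to the natural filtration $\mathcal{F}_k$, so $\mathbb{E}[M_k]\le u_I(x_0)$. Letting $k\to+\infty$, the boundary analysis gives $u_I(x_k)\to F(\pi)$ and, since $F$ is bounded, bounded convergence yields $\Es[F]\le u_I(x_0)+\ep$ for every $S_I$. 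Taking the supremum over $S_I$ and then the infimum over $S_{II}$ gives $u_{II}(x_0)\le u_I(x_0)+\ep$; as $\ep$ is arbitrary, $u_I=u_{II}=:u$. By the previous step this common value is $p$-harmonious, and by the first step it attains $F$, so $u$ solves $(DP)$.

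The main obstacle is not the boundary behaviour, which the forward structure of the tree makes painless: the token always descends one level, so plays automatically converge to $\partial\T$ and the intervals $I_{x_k}$ shrink. Rather, the technical heart is the rigorous dynamic programming principle together with the limit passage of the last step, where one must justify the supermartingale property with the correct filtration, control the measurability of the strategies, and apply bounded convergence on the infinite tree. These are exactly the points where I expect to follow the construction of the game value in \cite{ms1} and the argument of Theorem~3.4 in \cite{MPR2}.
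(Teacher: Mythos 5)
Your proposal is correct and takes essentially the same route as the paper: the paper gives no in-text argument at all, deferring the existence of the value to \cite{ms1} and the fact that the value solves $(DP)$ to an argument ``completely similar to the proof of Theorem 3.4 in \cite{MPR2}'' --- precisely the two references you yourself invoke for the technical core (the dynamic programming principle and the measurability of strategies). Your sketch --- boundary behaviour from the shrinking intervals $I_{x_k}$ and uniform continuity of $F$, the minimax inequality $u_I\le u_{II}$, and the $\varepsilon$-optimal supermartingale argument with bounded convergence for the reverse inequality --- is a faithful, more detailed rendering of what those citations contain, and the supermartingale step also mirrors the technique the paper uses in its own proof of the comparison principle (Theorem 3.2).
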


To see the form of game values $u$ (solution of $(DP)$) let us
mention that in
\cite{s-tree}, an explicit formulae for $\mathbb{P}_{S_I,
S_{II}}^{x_0}$ is given when $F$ is monotone, and therefore we
have an explicit formulae for $u$. In the next section, we will
show how to approximate $u$ in the general case.

From now on, we assume that $F:[0,1]\to\R$ is  a continuous
function. Next we show a comparison principle.

\begin{te}\label{com1} Let $G:[0,1]\to\R$ be a continuous function and $v$
be a bounded supersolution of $(DP)$ with boundary data $G$ such
that $G\ge F$ in $[0,1],$ then
\[
v(x)\ge u(x)
\]
for any $x\in\T,$ where $u$ is the value of game with final payoff
function $F.$
\end{te}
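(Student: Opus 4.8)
The plan is to prove the comparison principle $v \ge u$ by exploiting the probabilistic representation of $u$ as the game value. The key idea is that $u = u_I = \sup_{S_I}\inf_{S_{II}} \Es[F]$, so to show $v(x_0) \ge u(x_0)$ for a fixed starting vertex $x_0$, it suffices to produce, for Player II, a single strategy $S_{II}^*$ such that $\inf_{S_{II}}\sup_{S_I}\Es[F] \le v(x_0)$; equivalently, I want to show that against Player II's optimal strategy the expected payoff never exceeds $v(x_0)$, no matter what Player I does. The natural choice, as in the Tug-of-War literature (cf.\ the argument for Theorem 3.4 in \cite{MPR2}), is to let Player II always move to the successor where $v$ attains its minimum, i.e.\ $S_{II}^{k+1}(x_0,\dots,x_k) = \arg\min_{y\in\S(x_k)} v(y)$.

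\textbf{Key steps.} First I would fix $x_0\in\T$ and the strategy $S_{II}^*$ just described, and let $S_I$ be an arbitrary strategy for Player I. The central step is to show that the sequence $M_k := v(x_k)$ is a supermartingale under $\mathbb{P}^{x_0}_{S_I,S_{II}^*}$ with respect to the natural filtration generated by the game history. Indeed, because $v$ is $p$-superharmonious, for each vertex $x_k$ we have
\[
v(x_k) \ge \frac{\alpha}{2}\Bigl\{\max_{y\in\S(x_k)}v(y)+\min_{y\in\S(x_k)}v(y)\Bigr\}+\frac{\beta}{m}\sum_{y\in\S(x_k)}v(y).
\]
Computing the conditional expectation $\mathbb{E}[v(x_{k+1})\mid x_0,\dots,x_k]$ against the transition probability $q_{S_I,S_{II}^*}$: Player I's contribution (probability $\alpha/2$) moves to some $y=S_I^k(\dots)$ with $v(y)\le\max_{\S(x_k)}v$, Player II's contribution (probability $\alpha/2$) moves to the minimizer giving exactly $\min_{\S(x_k)}v$, and the random term (probability $\beta$) gives the average $\frac{1}{m}\sum v(y)$. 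Thus the conditional expectation is bounded above by the right-hand side of the superharmonicity inequality, which is $\le v(x_k)$, establishing the supermartingale property.

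\textbf{Passing to the boundary.} Since $v$ is bounded, the supermartingale $\{M_k\}$ is bounded and hence converges $\mathbb{P}^{x_0}_{S_I,S_{II}^*}$-almost surely to a limit $M_\infty$ along the branch $\pi=(x_0,x_1,\dots)$. The supermartingale property together with boundedness (dominated or optional-stopping-type argument) yields $v(x_0)=M_0 \ge \mathbb{E}[M_\infty]$. The next step uses that $v$ is a supersolution of $(DP)$ with data $G$: by definition $\lim_{k\to\infty}v(x_k)\ge G(\pi)$ almost surely along branches, so $M_\infty \ge G(\pi) \ge F(\pi)$ using the hypothesis $G\ge F$. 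Combining,
\[
v(x_0)\ge \mathbb{E}\bigl[M_\infty\bigr] \ge \mathbb{E}_{S_I,S_{II}^*}^{x_0}[F]=\Es[F]\big|_{S_{II}=S_{II}^*}.
\]
Taking the supremum over $S_I$ and then noting $u(x_0)=u_{II}(x_0)=\inf_{S_{II}}\sup_{S_I}\Es[F]\le \sup_{S_I}\Es[F]\big|_{S_{II}=S_{II}^*}\le v(x_0)$ completes the argument for every $x_0\in\T$.

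\textbf{Main obstacle.} The delicate point is the interchange of limit and expectation when passing to the boundary, i.e.\ justifying $v(x_0)\ge\mathbb{E}[M_\infty]$ and that $M_\infty\ge G(\pi)$ holds almost surely rather than merely along deterministic branches. This requires the boundedness of $v$ (to invoke bounded convergence) and a careful check that the supersolution boundary condition, stated for every branch $\pi\in\partial\T$, transfers to an almost-sure statement under the measure $\mathbb{P}^{x_0}_{S_I,S_{II}^*}$—which it does precisely because the condition is pointwise on all of $\partial\T$. A secondary technical subtlety is confirming that the minimizer $\arg\min_{y\in\S(x_k)}v(y)$ can be chosen measurably so that $S_{II}^*$ is an admissible strategy; since $\S(x_k)$ is finite this is immediate.
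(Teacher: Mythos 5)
Your proposal is correct and follows essentially the same route as the paper: Player II pulls toward the minimizer of $v$, the superharmonicity of $v$ makes $M_k=v(X_k)$ a supermartingale under $\mathbb{P}^{x_0}_{S_I,S_{II}^*}$ for any strategy of Player I, and then a bounded-supermartingale/optional-stopping argument combined with the boundary inequality $\lim_k v(x_k)\ge G(\pi)\ge F(\pi)$ and the identity $u=u_{II}$ yields $v\ge u$. In fact you spell out the limiting step (almost-sure convergence, $v(x_0)\ge\mathbb{E}[M_\infty]$, and the pointwise-to-almost-sure transfer of the boundary condition) in more detail than the paper, which simply invokes Theorem 4.2.2 of \cite{ms1} and the Optional Stopping Theorem.
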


\begin{proof}
First, we show that by choosing a strategy according to the
minimal values of $v,$ Player II can make the process a
supermartingale. More precisely,  Player I follows any strategy
and Player II follows the following strategy, that we will call
$S_{II}^{0}:$ at $x_{k-1}\in\T$ he chooses to step to a vertex
that minimizes $v,$ i.e. a vertex $x_k\in\S(x_{k-1})$ such that
\[
 v(x_k)= \min_{y\in\S(x_{k-1})} v(y).
\]
We start from a vertex $x_0.$ Using that $v$ is a supersolution of
$(DP)$ and the estimated the strategy of Player I by the supremum,
we have that
$$
\begin{array}{l}
\displaystyle \mathbb{E}_{S_I,S_{II}^0}^{x_0}[v(X_k)|x_0,\dots, x_{k-1}]
\\[10pt]
\displaystyle \le
\frac{\alpha}{2}\left\{\min_{y\in\S(x_{k-1})}v(y)+ \max_{y\in\S(x_{k-1})}v(y)\right\}
+ \frac{\beta}{m}\sum_{y\in\S(x_{k-1})}v(y)\\[10pt]
\displaystyle \le v(x_{k-1}),
\end{array}
$$
where $X_k$ is the coordinate process
defined by
\[
X_k(\omega):= x_k \mbox{ for }
\omega=(x_0,\dots,x_k,\dots) \in \T\times\T\times\cdots.
\]

Thus $M_k= v(X_{k})$ is a supermartingale. From this fact, using
Theorem 4.2.2 in \cite{ms1}, the Optional Stopping Theorem,
and that $G\ge F$ in $[0,1],$ we get the desired result.
\end{proof}

Moreover, we have an analogous result for bounded subsolutions of $(DP)$.

\begin{te}\label{com2} Let $G:[0,1]\to\R$ be a bounded function and $v$
be a bounded subsolution of $(DP)$ with boundary data $G$ such
that $G\le F$ in $[0,1],$ then
\[
v(x)\le u(x)
\]
for any $x\in\T,$ where $u$ is the value of the game with final
payoff function $F.$
\end{te}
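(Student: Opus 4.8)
The plan is to mirror the proof of Theorem~\ref{com1} (the supersolution comparison) but with the roles of the players reversed, so that the relevant process becomes a submartingale rather than a supermartingale. The key observation is that in Theorem~\ref{com1} Player~II drove the process down by always stepping to a minimizing successor of $v$; here, since we are comparing a subsolution $v$ from below against the game value $u$, it is Player~I who should play optimally with respect to $v$.

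First I would fix a strategy for Player~I that I will call $S_I^0$: starting from a vertex $x_0$, whenever Player~I gets to move (that is, when the biased coin gives heads and the fair coin gives heads) he steps from $x_{k-1}$ to a vertex $x_k\in\S(x_{k-1})$ with
\[
v(x_k)=\max_{y\in\S(x_{k-1})}v(y).
\]
Player~II is then allowed to follow an arbitrary strategy $S_{II}$. The single computational step is to estimate the conditional expectation of $v(X_k)$ given the history $(x_0,\dots,x_{k-1})$. Using that $v$ is $p$-subharmonious and bounding Player~II's move from below by the minimum over the successors, I get
\[
\mathbb{E}_{S_I^0,S_{II}}^{x_0}[v(X_k)\mid x_0,\dots,x_{k-1}]
\ge
\frac{\alpha}{2}\left\{\max_{y\in\S(x_{k-1})}v(y)+\min_{y\in\S(x_{k-1})}v(y)\right\}
+\frac{\beta}{m}\sum_{y\in\S(x_{k-1})}v(y)
\ge v(x_{k-1}),
\]
where the last inequality is exactly the $p$-subharmonious property. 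Hence $M_k=v(X_k)$ is a bounded submartingale under $\mathbb{P}_{S_I^0,S_{II}}^{x_0}$.

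Then I would invoke the Optional Stopping Theorem, precisely as in the proof of Theorem~\ref{com1} but for submartingales, together with the boundary hypothesis. Since $v$ is bounded and a subsolution of $(DP)$ with data $G$, we have $\lim_{k\to+\infty}v(x_k)\le G(\pi)\le F(\pi)$ along every branch, and combining the submartingale inequality $v(x_0)\le \mathbb{E}_{S_I^0,S_{II}}^{x_0}[\lim_k v(X_k)]$ with $G\le F$ gives
\[
v(x_0)\le \inf_{S_{II}}\mathbb{E}_{S_I^0,S_{II}}^{x_0}[F]\le \sup_{S_I}\inf_{S_{II}}\Es[F]=u_I(x_0)=u(x_0),
\]
the final equality being that the game has a value (Theorem~\ref{valor}). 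As $x_0$ was arbitrary this proves $v\le u$ on all of $\T$.

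I do not expect any genuine obstacle, since the argument is the mirror image of Theorem~\ref{com1}; the only point requiring a little care is the limit interchange when passing from the finite-stage submartingale bound to the boundary payoff. This is handled exactly as in the cited proof by the boundedness of $v$ (which lets the Optional Stopping / dominated convergence machinery from Theorem~4.2.2 in \cite{ms1} apply) and by the continuity of $F$, so that $\lim_k v(X_k)$ exists $\mathbb{P}^{x_0}_{S_I^0,S_{II}}$-almost surely along branches and is dominated by $F$.
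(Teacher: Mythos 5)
Your proposal is correct and coincides with the paper's intended argument: the paper proves Theorem~\ref{com2} by simply noting that ``the proof is similar to the previous one,'' i.e.\ the mirror of Theorem~\ref{com1}, which is exactly what you carry out (Player~I steps to a maximizing successor of $v$, making $v(X_k)$ a bounded submartingale, then optional stopping plus $G\le F$ and $u_I=u$ give $v\le u$). The details you supply, including the final chain $v(x_0)\le \inf_{S_{II}}\mathbb{E}_{S_I^0,S_{II}}^{x_0}[F]\le u_I(x_0)=u(x_0)$, are sound.
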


\begin{proof}
The proof is similar to the previous one.
\end{proof}

Then, we arrive to the main result of this section.

\begin{te}\label{eu} There exists a unique bounded solution of  $(DP)$
with given \mbox{boundary} data $F.$ Moreover, it coincides with the value of the game.
\end{te}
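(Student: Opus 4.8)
The plan is to obtain existence and uniqueness by combining the two comparison principles already established, Theorems \ref{com1} and \ref{com2}, with the existence of a game value from Theorem \ref{valor}. First I would invoke Theorem \ref{valor} directly: for the given continuous payoff $F$, the game has a value $u$, and this value is a solution of $(DP)$ with boundary data $F$. This immediately settles existence, and it also guarantees that $u$ is bounded, since $F$ is continuous on the compact interval $[0,1]$ and hence bounded, so the expected payoff $\Es[F]$ is trapped between $\inf F$ and $\sup F$ for every pair of strategies, forcing $\inf F\le u(x)\le \sup F$ for all $x\in\T$.

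For uniqueness, I would take any bounded solution $w$ of $(DP)$ with the same boundary data $F$ and show $w\equiv u$. The idea is to apply both comparison theorems with $G=F$. Since $w$ is a solution, it is in particular a bounded supersolution of $(DP)$ with boundary data $G=F$, and trivially $G\ge F$; Theorem \ref{com1} then yields $w(x)\ge u(x)$ for all $x\in\T$. Symmetrically, $w$ is also a bounded subsolution of $(DP)$ with boundary data $G=F$ satisfying $G\le F$, so Theorem \ref{com2} gives $w(x)\le u(x)$ for all $x\in\T$. Combining the two inequalities forces $w(x)=u(x)$ for every vertex $x$, which is the desired uniqueness. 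The final clause of the statement, that the unique solution coincides with the game value, is then automatic, since the $u$ produced by Theorem \ref{valor} is by construction the value of the game.

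The main thing to be careful about is verifying that the hypotheses of the two comparison principles genuinely apply to an arbitrary bounded solution, rather than only to the specific value function $u$. Here the key observation is that a solution of $(DP)$ is simultaneously a supersolution and a subsolution with the \emph{same} boundary data, because the limiting condition $\lim_{k\to\infty} w(x_k)=F(\pi)$ implies both $\lim_{k\to\infty} w(x_k)\ge F(\pi)$ and $\lim_{k\to\infty} w(x_k)\le F(\pi)$ along every branch $\pi\in\partial\T$, and a $p$-harmonious function is both $p$-superharmonious and $p$-subharmonious by definition. Thus the structural requirements of Theorems \ref{com1} and \ref{com2} are met with no loss. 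I expect no genuine obstacle in this argument; the real work has already been carried out in the supermartingale construction underlying the comparison principles, and this theorem is essentially a bookkeeping consequence of packaging those results together.
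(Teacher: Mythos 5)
Your proposal is correct and follows exactly the paper's argument: existence (and boundedness) of a solution comes from the game value via Theorem \ref{valor}, and uniqueness follows by applying Theorem \ref{com1} and Theorem \ref{com2} with $G=F$ to an arbitrary bounded solution, which is simultaneously a supersolution and a subsolution with the same data. The paper states this in two sentences; you have merely spelled out the same bookkeeping in full.
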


\begin{proof}
Theorem \ref{valor} gives that the value of the game is a solution
of $(DP)$. This proves existence. Theorems
\ref{com1} and \ref{com2} imply uniqueness.
\end{proof}

The above theorem, together with Theorems \ref{com1} and \ref{com2},
give the Comparison Principle for solutions of $(DP)$.

\begin{te}[Comparison Principle]\label{CP}
Let $F, G:[0,1]\to\R$ be  bounded functions.
If $v$ is a bounded  supersolution (subsolution) of $(DP)$ with boundary data $G,$ $u$ is the solution of $(DP)$
with boundary data $F$ and $F\le G$ ($F\ge G$) in $[0,1]$, we have that $u\le v$ ($u\ge v$) in $\T.$
\end{te}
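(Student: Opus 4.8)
The plan is to deduce this directly from the existence/uniqueness result together with the two one-sided comparison statements established just above, so that no new argument is needed. The crucial link is Theorem~\ref{eu}: it identifies the unique bounded solution $u$ of $(DP)$ with boundary data $F$ as the value of the Tug-of-War game with final payoff $F$. Once $u$ is realized as a game value, Theorems~\ref{com1} and~\ref{com2} apply to it with that same $F$, which is exactly what the Comparison Principle asks for.

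First I would treat the supersolution case. Suppose $v$ is a bounded supersolution of $(DP)$ with boundary data $G$ and $F\le G$ on $[0,1]$. By Theorem~\ref{eu}, $u$ is the value of the game with payoff $F$; since moreover $G\ge F$, Theorem~\ref{com1} yields $v(x)\ge u(x)$ for every $x\in\T$, that is $u\le v$ on $\T$, as claimed. The subsolution case is completely symmetric: if $v$ is a bounded subsolution of $(DP)$ with boundary data $G$ and $F\ge G$, then, again using that $u$ is the game value with payoff $F$ and that now $G\le F$, Theorem~\ref{com2} gives $v(x)\le u(x)$ for all $x\in\T$, i.e. $u\ge v$ on $\T$.

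The only point that requires a moment's care—and this is the closest thing to an obstacle—is the matching of hypotheses in the supersolution half. Theorem~\ref{com1} is stated for continuous $G$, whereas here $G$ is merely assumed bounded. Inspecting its proof, however, one sees that continuity of $G$ is never invoked: only that $v$ is a supersolution, the resulting supermartingale property of $M_k=v(X_k)$, the Optional Stopping Theorem, and the pointwise boundary inequality $G\ge F$ actually enter the argument. Hence that proof goes through unchanged for bounded $G$, and Theorem~\ref{com2} is already phrased for bounded data, so the subsolution half needs no adjustment at all. Consequently no genuine difficulty arises, and the Comparison Principle follows at once from Theorems~\ref{eu}, \ref{com1} and~\ref{com2}.
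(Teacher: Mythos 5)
Your proposal is correct and follows exactly the paper's own route: the paper derives the Comparison Principle precisely by combining Theorem~\ref{eu} (the solution coincides with the game value) with the one-sided comparisons of Theorems~\ref{com1} and~\ref{com2}. Your extra observation that the continuity hypothesis on $G$ in Theorem~\ref{com1} is never actually used is a fair and careful remark about a looseness in the paper's hypotheses, but it does not change the argument.
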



\section{A Numerical Approximation}\label{sect-aproximacion}

In this section we give a numerical approximation for the
solutions of $(DP)$ when the boundary datum $F$  is a continuous
function.

Let $F$ be a real-valued function on $[0,1]$ and $n\in\N,$ we
define $F_n:[0,1]\to\R$ as
\[
F_n(t)=\sum_{j=0}^{m^n-1}
F(t_{nj})\CChi_{I_{nj}}(t)
\]
where $t_{nj}=\frac{j}{m^n},$
$I_{nj}=[t_{nj},t_{n(j+1)})$
for all $j\in\{0,\dots,m^n-2\}$  and
$I_{n(m^n-1)}=[t_{n(m^n-1)},1]$. Note that
this function is piecewise constant.

Our next goal is to construct a $F$-harmonic function $u_n$
such that $u_n(x)=F_n(x)$ for all $x\in\T^k$ for any $k\ge n.$

We first observe that, for all
$j\in\{0,\dots,m^n-1\}$  there exists  $x_{nj}\in\T^n$ such that
$I_{x_{nj}}=\overline{I_{nj}}.$ Then, for all
$k\in\{1,\dots,n\}$, we take
$\{x_{(n-k)j}\}_{j=0}^{m^{n-k}-1}\subset\T$ such that
\[
\S(x_{(n-k)j})=\{x_{(n-k+1)\tau}\colon 1+(j-1)m\le\tau\le jm\} \quad\forall j\in\{0,\dots,m^{n-k}-1\}.
\]

Let $u_n:\T\to\R$ such that
\[
 u_n(y)= F(t_{nj}) \quad\forall y\in\T \mbox{ such that }
 I_y\subset I_{x_{nj}} \mbox{ for some } {j\in\{1,\dots,m^n-1
 \}},
\]
and for any $k\in\{1,\dots,n\}$
\[
u_n(x_{(n-k)j})=\frac{\alpha}2\left\{\max_{y\in \S(x_{(n-k)j})}u(y)
+ \min_{y\in \S(x_{(n-k)j})}u(y)\right\} + \frac{\beta}{m}\sum_{y\in \S(x_{(n-k)j})} u(y)
\]
for all $j\in\{0,\dots,m^{n-k}-1\}$. It is easy to check that
$u_n$ is a $p-$harmoniuous function. Moreover,
if $F$ is bounded then $\{u_n\}_{n\in\N}$ is uniformly bounded on
$\T.$

\begin{re}\label{cu}
Let $F$ be a continuous function on $[0,1].$ Then, given $\varepsilon>0$
there exists $\delta=\delta(\varepsilon)>0$ such that
\[
|F(x)-F(y)|\le \frac{\varepsilon}{2}+\frac{2\|F\|_{\infty}}{\delta}|x-y|
\]
for all $x,y\in[0,1].$
\end{re}

We are now ready to state the main result of this section.

\begin{te}
Let $F:[0,1]\to\R$ be a continuous function. Then the sequence
$\{u_n\}_{n\in\N}$ converges uniformly to the solution $u$ of
$(DP)$ with boundary data $F.$ Moreover, if $F$ is a Lipschitz
function we have a bound for the error, it holds that
\[
|u_{n}(x)-u(x)|\le \frac{ L}{m^n}
\]
for all $x\in\T,$ where $L$ is the Lipschitz constant of $F.$

\end{te}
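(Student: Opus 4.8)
The plan is to deduce the result from the Comparison Principle, after controlling how far the boundary values of $u_n$ lie from $F$. Recall that $u_n$ is $p$-harmonious and bounded (since $F$ is continuous on $[0,1]$, hence bounded). The starting observation is the behaviour of $u_n$ along branches: given $\pi=(x_1,\dots,x_k,\dots)\in\partial\T$, let $x_n\in\T^n$ be the $n$-level vertex lying on $\pi$ and write $I_{x_n}=\overline{I_{nj}}=[t_{nj},t_{nj}+m^{-n}]$. By construction $u_n(x_k)=F(t_{nj})$ for every $k\ge n$, so $\lim_{k\to+\infty}u_n(x_k)=F(t_{nj})$. Moreover $\psi(\pi)\in I_{x_n}$, whence $|\psi(\pi)-t_{nj}|\le m^{-n}$; thus the boundary value of $u_n$ along $\pi$ is $F$ evaluated at a point at distance at most $m^{-n}$ from $\psi(\pi)$.

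Next I would exploit that the right-hand side of \eqref{eq.main.intro} is affine with weights summing to one ($\frac{\alpha}2+\frac{\alpha}2+\frac{\beta}{m}\cdot m=\alpha+\beta=1$), so that adding a constant to a $p$-harmonious function produces again a $p$-harmonious function. In the Lipschitz case this is all that is needed: by the Lipschitz bound $|F(\psi(\pi))-F(t_{nj})|\le L|\psi(\pi)-t_{nj}|\le L\,m^{-n}$, the function $u_n+Lm^{-n}$ is a bounded $p$-harmonious function whose limit along each $\pi$ equals $F(t_{nj})+Lm^{-n}\ge F(\psi(\pi))$, i.e.\ a bounded supersolution of $(DP)$ with boundary data $F$. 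Theorem \ref{com1} (taking $G=F$) then gives $u\le u_n+Lm^{-n}$. Symmetrically, $u_n-Lm^{-n}$ is a bounded subsolution of $(DP)$ with data $F$, and Theorem \ref{com2} yields $u_n-Lm^{-n}\le u$. Combining the two inequalities gives $|u_n(x)-u(x)|\le L\,m^{-n}$ for every $x\in\T$, which is the claimed error bound.

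For a merely continuous $F$ I would run the same comparison, replacing the Lipschitz estimate by the modulus-of-continuity bound of Remark \ref{cu}: given $\varepsilon>0$ pick the corresponding $\delta$, and then take $n$ so large that $\frac{2\|F\|_{\infty}}{\delta}m^{-n}\le\frac{\varepsilon}{2}$. For such $n$ one obtains $|F(\psi(\pi))-F(t_{nj})|\le\varepsilon$ for every branch $\pi$, so that $u_n+\varepsilon$ and $u_n-\varepsilon$ are respectively a bounded super- and subsolution of $(DP)$ with data $F$. Theorems \ref{com1} and \ref{com2} then give $\|u_n-u\|_{\infty}\le\varepsilon$ for all large $n$, which is precisely the uniform convergence $u_n\to u$; existence and uniqueness of the limit $u$ are guaranteed by Theorem \ref{eu}.

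The only delicate point, and the step I would check most carefully, is the identification of the limit of $u_n$ along each branch together with the verification that the constant-shifted functions $u_n\pm c$ are genuine super/subsolutions of $(DP)$ \emph{with boundary data $F$} (not merely with the step data $F_n$); once this is settled, the Comparison Principle does all the remaining work and no further essential difficulty is expected.
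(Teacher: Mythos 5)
Your proof is correct, and it is essentially the comparison-principle route that the paper uses in its \emph{second} proof of this theorem: $u_n$ solves the problem with the piecewise-constant data $F_n$, the $F_n$ converge uniformly to $F$, and comparison transfers this to uniform closeness of $u_n$ and $u$. The implementations differ slightly. The paper applies Theorem \ref{CP} on the subtrees rooted at the vertices $x_{nj}$ (where $u_n$ is the constant $F(t_{nj})$) to obtain $|u_n-u|\le\varepsilon$ at all sufficiently deep vertices, and then propagates the estimate upward through the tree using that both functions satisfy the dynamic programming equality; you instead exploit that the weights in the equation sum to one, so that $u_n\pm c$ is again $p$-harmonious, identify its boundary limits along branches, and apply Theorems \ref{com1} and \ref{com2} with $G=F$ to get the two-sided bound on all of $\T$ in one stroke. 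A small bonus of your version is that the identical mechanism with $c=Lm^{-n}$ delivers the Lipschitz error bound, which in the paper is instead extracted from the first proof (the uniformly Cauchy estimates of Steps 2 and 4) rather than from comparison. The corresponding loss is self-containedness: like the paper's second proof, yours presupposes existence and uniqueness of $u$ (Theorem \ref{eu}, obtained via game theory), whereas the paper's first proof constructs $u$ as the uniform limit of the $u_n$ and verifies the boundary condition directly, so it doubles as an independent existence proof.
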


\begin{proof} We present two proofs of this result. The first
proof only uses game theory to show uniqueness and can be viewed
as an alternative way to prove existence of a solution.

This first proof we will  be divided into 4 steps.

\noindent {\bf Step 1.} Since $F$ is a continuous function on $[0,1],$
by Remark \ref{cu}, given $\varepsilon>0$ there exists
$\delta=\delta(\varepsilon)>0$ such that
\[
|F(x)-F(y)|\le \frac{\varepsilon}{2}+\frac{2\|F\|_{\infty}}{\delta}|x-y|
\]
for all $x,y\in[0,1].$ Therefore, for all $n\in\N$ we have that
\begin{equation*}
|F_n(x)-F(y)|\le \frac{\varepsilon}{2}+\frac{2\|F\|_{\infty}}{\delta m^n}
\quad \forall x,y\in I_{nj} \quad\forall j\in\{0,\dots,m^n-1
\}.
\end{equation*}
Then $\{F_n\}_{n\in\N}$ converges uniformly to $F.$

\noindent {\bf Step 2.} We will prove that $\{u_n\}_{n\in\N}$ is an uniformly Cauchy sequence.

Let $h, k,n\in\N$ and $x\in\T^h.$ If $n\le k\le h,$  there exist
$i\in\{0,\dots,m^n-1\}$ and $j \in\{0,\dots,m^k-1\}$ such that
$u_n(x)=F(t_{ni})$ and $u_k(x)=F(t_{kj}).$ Moreover $I_{x}\subset
I_{x_{kj}}\subset I_{x_{ni}}.$ Then, given $\varepsilon>0,$  using
Remark \ref{cu}, we have that
\begin{equation*}
|u_{n}(x)-u_{k}(x)|\le |F(t_{ni})-F(t_{kj})| \le \frac{\varepsilon}{2}
+\frac{2\|F\|_{\infty}}{\delta m^n} \quad \forall x\in \T^h.
\end{equation*}
Thus, there exists $n_0$ such that if $n\ge n_0,$
\begin{equation}\label{laprima}
|u_{n}(x)-u_{k}(x)|\le \varepsilon \quad \forall x\in \T^h.
\end{equation}
For all $x\in\T^{k-1},$ by \eqref{laprima}, we have that
\[
u_{k}(y)-\ep\le u_n(y) \le u_{k}(y)+\ep \quad\forall y\in\S(x).
\]
Then
\[
u_{k}(x)-\ep\le u_n(x) \le u_{k}(x)+\ep \quad\forall  x\in\T^{k-1},
\]
i.e.,
\[
|u_{n}(x)-u_{k}(x)|\le \ep  \quad\forall  x\in\T^{k-1}.
\]
In the same manner, in $k-1-$steps, we can see that
\[
|u_{n}(x)-u_{k}(x)|\le \ep  \quad\forall  x\in\T.
\]
Therefore $\{u_n\}_{n\in\N}$ is an uniformly Cauchy sequence.

\noindent {\bf Step 3.}  Now, we will show that
\[
u(x)=\lim_{n\to+\infty} u_n(x) \quad\forall x\in\T
\]
is the solution of $(DP)$ with boundary data $F.$

By step 2, $\{u_n\}_{n\in\N}$ converges uniformly to $u.$ Therefore,
 by Lemma \ref{unifconv}, $u$ is a $p-$harmonious function. Then we only need to show that
\[
\lim_{k\to+\infty} u(x_k)=F(\pi) \quad \forall \pi=(x_1,\dots,x_k,\dots)\in\partial\T.
\]

Let $\ep>0$ and  $\pi=(x_1,\dots,x_k,\dots)\in\partial\T.$
Since $\{u_n\}_{n\in\N}$ converges uniformly to $u,$ there exists
$n_0=n_0(\ep)$ such that
\begin{equation}
|u_{n}(x_j)-u(x_j)|<\frac{ \ep}{2} \quad\forall j\in\N,
\label{desc1}
\end{equation}
if $n\ge n_0.$ On the other hand, we can observe that there
exists $n_1=n_1(\ep)$ such that
\begin{equation}
|F_{n}(\pi)-F(\pi)|<\frac{ \ep}{2}
\label{desc2}
\end{equation}
if $n\ge n_1.$
Then, since $u_n(x)=F_n(x)$ for all $x\in\T^j$
for any $j\ge n,$ if $n\ge n_1$ we have that
\begin{equation}
		\label{desconunif2}|u_n(x_j)-F(\pi)|
		\le\frac{\varepsilon}2 \quad\forall j\ge n.
\end{equation}
Finally, taking $n\ge\max\{n_0,n_1\}$ and $j\ge n,$ by
\eqref{desc1} and \eqref{desconunif2}, we get
\[
|u(x_j)-F(\pi)|\le|u(x_j)-u_n(x_j)|+|u_n(x_j)-F(\pi)|\le
\varepsilon.
\]
Therefore,
\[
\lim_{k\to+\infty} u(x_k)=F(\pi) \quad \forall \pi=(x_1,\dots,x_k,\dots)\in\partial\T.
\]

\noindent {\bf Step 4.}
We observe that if $F$ is a Lipschitz function, in the same manner
as in step~2, we obtain that, if $k,n\in\N,$
\[
|u_n(x)-u_k(x)|\le \frac{L}{m^n} \quad\forall x\in\T.
\]
Therefore,
\[
|u_n(x)-u(x)|\le \frac{L}{m^n} \quad\forall x\in\T,
\]
where $L$ is the Lipschitz constant of $F.$ This completes the
first proof.

\medskip

Now we proceed with the second proof of this result. This proof is
shorter but we use here the existence and comparison results
proved in the previous section using game theory.

Using that $F_n$ is a continuous
function on $I_{x_{nj}}$ for all $j\in\{0, \dots,m^n-1\}$
(step 1),
$\{F_n\}_{n\in\N}$ converges uniformly to $F$ and Theorem \ref{CP}, we have that given $\varepsilon>0,$ there exists
$n_0=n_0(\varepsilon)\in\N$ such that for any $n\ge n_0$
\[
 u_n(x)-\varepsilon\le u(x)\le u_n(x)+ \varepsilon
\]
for all $x\in\T$ such that $I_x\subset I_{x_{nj}}$ for some
$j\in\{0,\dots,m^n-1\},$ where $u$ is the solution of $(DP)$ with
boundary data $F$. By the above inequality and using that $u_n$
and $u$ are $p-$harmonious functions, we have that
\[
 u_n(x)- \varepsilon\le u(x)\le u_n(x)+ \varepsilon\quad\forall x\in\T, \quad\forall n\ge n_0.
\]
Therefore the sequence $\{u_n\}_{n\in\N}$ converges uniformly to $u.$
\end{proof}

\begin{ex} Case $p=\infty.$ Let $m=3,$ $\alpha=1,$ $\beta=0$ and
$F:[0,1]\to\R$ given by $F(t)=t.$ In \cite{s-tree}, the author
proves that the solution of $(DP)$ with boundary data $F$ is
    \[
    u(x)=\int_{I_{x}}t\, d\mathcal{C}^x(t)\qquad \forall x\in\mathbb{T}_3,
    \]
    where $\mathcal{C}^x$ is the Cantor measure on the interval $I_x$ with $\mathcal{C}^x(I_x)=1.$

%
%
    \end{ex}

\begin{ex} Case $p=2.$ Let $m=3,$ $\alpha=0,$ $\beta=1$ and  $F:[0,1]\to\R$ given by
$F(t)=(t-\nicefrac{1}{2})^2.$ In this case, the solution $u$ of
$(DP)$ is
    \[
    u(x)=\frac{1}{|I_x|}\int_{I_{x}}\left(t-\frac12\right)^2\, dt\quad \forall x\in\mathbb{T}_3,
    \]
where $|I_x|$ is the  measure of  $I_x.$

%
%
    \end{ex}

\section{Unique continuation property} \label{sect-unique.cont}

In this section we prove our main result that deals with subsets of $\T$ that have the unique
continuation property.

\begin{de}
We say that a subset $U$ of $\T$ satisfies the unique continuation
property $(UCP)$ if for any bounded $p-$harmonious function $u$
such that $u=0$ in $U,$ we have that $u\equiv 0$ in $\T.$
\end{de}

Let us first prove that the density of the set $\psi(U)$ in
$[0,1]$ is a necessary condition for $UCP$.

\begin{te}
If $U\subset \T$  satisfies UCP then $\psi(U)$ is dense in
$[0,1].$
\end{te}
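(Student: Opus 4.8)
The plan is to argue by contraposition: assuming $\psi(U)$ is \emph{not} dense in $[0,1]$, I will exhibit a bounded $p$-harmonious function $u$ with $u\equiv 0$ on $U$ but $u\not\equiv 0$ on $\T$, which shows that $U$ fails UCP. If $\psi(U)$ is not dense, there is a nonempty open interval $(a,b)\subset[0,1]$ with $(a,b)\cap\psi(U)=\emptyset$. Since the intervals $I_x$ attached to level-$k$ vertices have length $m^{-k}\to 0$ and tile $[0,1]$, for $k$ large enough I can choose a vertex $z\in\T^k$ with $I_z\subset(a,b)$.

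The first observation is that no vertex $w$ with $I_w\subseteq I_z$ lies in $U$: indeed $I_w\subseteq I_z\subset(a,b)$ gives $\psi(w)\in(a,b)$, and since $(a,b)\cap\psi(U)=\emptyset$ this forces $w\notin U$. Hence I am completely free to prescribe $u$ on the subtree below $z$, and the only constraint imposed by $U$ is that $u$ vanish on the vertices of $U$ lying outside this subtree.

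Next I would define $u$ explicitly. Set $u\equiv 0$ on all of $\T$ except on the vertices $y$ with $I_y\subseteq I_z$. On that subtree, pick two distinct successors $c_1,c_2\in\S(z)$; let $u\equiv 1$ on the subtree below $c_1$, let $u\equiv -1$ on the subtree below $c_2$, let $u\equiv 0$ on the subtrees below the remaining successors of $z$, and set $u(z)=0$. This $u$ is bounded and vanishes on $U$, since it is zero off the subtree below $z$ while inside that subtree there are no points of $U$. It remains only to check $p$-harmonicity. On the interior of each constant subtree the equation holds trivially because constants are $p$-harmonious, and at $c_1$ (resp. $c_2$) the equation reduces to $\pm 1=\pm(\alpha+\beta)=\pm 1$. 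At $z$ the successor values are $1,-1$ and zeros, so $\max_{y\in\S(z)}u(y)+\min_{y\in\S(z)}u(y)=0$ and $\sum_{y\in\S(z)}u(y)=0$; thus the right-hand side of the equation equals $0=u(z)$, exactly the value glued in. Consequently every successor of the parent of $z$ carries value $0$, the equation yields $0$ there, and this propagates up to the root, so $u$ is $p$-harmonious on all of $\T$.

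I expect the only delicate point to be the gluing at $z$: one must arrange the pattern $1,-1,0,\dots,0$ on $\S(z)$ precisely so that both the max--min term and the average vanish, forcing $u(z)=0$ and letting the function match seamlessly with the identically-zero part of $\T$. Since the resulting $u$ is bounded, $p$-harmonious, zero on $U$, yet not identically zero, it contradicts the assumption that $U$ enjoys UCP, completing the contrapositive argument.
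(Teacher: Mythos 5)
Your proposal is correct and follows essentially the same route as the paper: both argue by contraposition, locate a vertex $z$ whose interval $I_z$ sits inside an open interval missed by $\psi(U)$, and build a bounded $p$-harmonious function equal to $1$ on the subtree below one successor of $z$, $-1$ below another, and $0$ elsewhere (the paper simply fixes the successors $(z,0)$ and $(z,m-1)$). The verification of $p$-harmonicity at $z$, at the chosen successors, and upward to the root matches the paper's construction.
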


\begin{proof}
We will show that if $\psi(U)$ is not dense in $[0,1],$  then
there exists a $p-$harmonious function $u$ such that $u\neq0$ in
$\T$ and $u=0$ in $U$.

Since $\psi(U)$ is not dense in $[0,1]$
there exist $\tau>0$ and $r\in[0,1]$ such that
\begin{equation}\label{vacio}
(r-\tau,r+\tau)\cap\psi(U)=\emptyset.
\end{equation}
Then there exist $k\in\N$ and $x=(a_1,\dots,a_k)\in\T$ such that
$\nicefrac{1}{m^k}<\tau$ and $I_x\subset(r-\tau,r+\tau).$
Therefore, using
\eqref{vacio} and the fact that $I_x$ is the subset of $\partial\T$
consisting of all branches that start at $x,$ we have that
$(x,b_1,\dots,b_s)\notin U$ for all $s\in\N.$ Now, we construct
$u$ as follows
\[
u(y)=
   \begin{cases}
  \, \,  \,   \,  1 & \forall y\in\T\text{  such that  } I_y\subset I_{(x,0)},  \\
     -1 &\forall y\in\T\text{  such that  } I_y\subset I_{(x,m-1)}, \\
   \, \,  \, \,  0 & \text{ otherwise}.
  \end{cases}
\]
It is clear that $u$ is a bounded $p-$harmonious function such
that $u=0$ in $U$ and   $u\neq0.$ This finishes the proof.
\end{proof}

\begin{pr}\label{PA}
Let $U$ be a subset of $\T$. If $U$ satisfies the following property
\begin{enumerate}
    \item[{\rm(PA)}] There exists $n\in\N$ such that for all $x\in\T$ there exist $l\in\{1,\dots,n\}$ and
        at least one branch starting at $x$ such that its $l-$th node belongs to $U,$
\end{enumerate}
then $U$ satisfies UCP.
\end{pr}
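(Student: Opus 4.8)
The plan is to prove that for \emph{any} bounded $p-$harmonious function $u$ vanishing on $U$ one has $\sup_{\T}u\le 0$. Applying this conclusion both to $u$ and to $-u$ then finishes the argument: the function $-u$ is again $p-$harmonious and vanishes on $U$, because negation interchanges $\max_{\S(x)}u$ with $\min_{\S(x)}u$ and negates the whole right-hand side of \eqref{eq.main.intro}, so $\sup_{\T}(-u)\le 0$, i.e. $\inf_{\T}u\ge 0$, and hence $u\equiv 0$. So fix such a $u$ and set $M:=\sup_{\T}u$, which is finite since $u$ is bounded; the goal is to show $M\le 0$.

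First I would record the algebraic fact hidden in the equation. Writing the right-hand side of \eqref{eq.main.intro} as $\sum_{y\in\S(x)}w_y\,u(y)$, a weighted average of the successor values, the weights satisfy $\sum_{y\in\S(x)}w_y=\alpha+\beta=1$ and $w_y\ge\beta/m>0$ for every $y$ (the vertices realizing the maximum and the minimum carry weight $\tfrac{\alpha}{2}+\tfrac{\beta}{m}$, every other successor carries weight $\tfrac{\beta}{m}$). From this I would derive an \emph{approximate maximum propagation principle}: if $u(x)>M-\delta$ for some $\delta>0$, then for each fixed $y\in\S(x)$, bounding the remaining successor values by $M$,
$$
M-\delta<u(x)=\sum_{y'\in\S(x)}w_{y'}u(y')\le w_y\,u(y)+(1-w_y)M ,
$$
which, since $w_y\ge\beta/m$, rearranges to $u(y)>M-\frac{m}{\beta}\delta$. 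Thus near-maximality at a vertex propagates to all of its successors, with a loss of at most the factor $m/\beta$ per level.

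Then I would invoke (PA). Fix $\ep>0$ and choose $x\in\T$ with $u(x)>M-\ep$, possible because $M$ is a supremum. By (PA) there is a branch starting at $x$ and an index $l\le n$ whose $l$-th vertex $z$ lies in $U$. Iterating the propagation estimate along this finite path (at most $n$ times, using $m/\beta>1$) yields $u(z)>M-(m/\beta)^{n}\ep$. But $z\in U$ forces $u(z)=0$, so $M<(m/\beta)^{n}\ep$. Since $n$ is the \emph{fixed} integer furnished by (PA) while $\ep>0$ is arbitrary, letting $\ep\to 0$ gives $M\le 0$, as required.

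The main obstacle, conceptually, is that the supremum of $u$ need not be attained at any vertex, so the two monotonicity lemmas proved earlier — which assume the maximum is realized at a point — cannot be applied directly. The remedy is exactly the quantitative, $\ep$-approximate version of those lemmas above, combined with the observation that the per-level loss $m/\beta$ is raised only to a power bounded by $n$. It is the \emph{uniformity} of $n$ over all $x\in\T$ guaranteed by (PA) that keeps the constant $(m/\beta)^{n}$ finite; were the depth needed to reach $U$ unbounded, this constant would blow up and the argument would fail.
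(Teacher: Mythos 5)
Your proof is correct and follows essentially the same route as the paper's: choose an $\ep$-approximate maximizer, propagate near-maximality to all successors with a fixed per-level loss factor, follow the branch of length at most $n$ guaranteed by (PA) into $U$ where $u$ vanishes, conclude $\sup_{\T}u\le 0$, and handle the infimum symmetrically (the paper says ``in the same manner'' where you pass to $-u$). The only cosmetic difference is your per-level loss factor $m/\beta$ versus the paper's sharper $\left(\nicefrac{\alpha}{2}+\nicefrac{\beta}{m}\right)^{-1}$, which is immaterial for the conclusion.
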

\begin{re} Let $U$ be a subset of $\T.$
It is easy to see that if  $U$ satisfies PA, then $\psi(U)$ is dense in $[0,1].$
\end{re}
\begin{proof}[Proof of Proposition \ref{PA}]
Let $u$ be a bounded $p-$harmonious function such that $u=0$ in
$U$. Set $M= \sup\{u(x)\colon x\in\T\}$ and $\delta=
\left(\nicefrac{\alpha}{2} + \nicefrac{\beta}{m}\right)$. Given
$\varepsilon>0$ there exists $x_0\in\T$ such that $u(x_0)\geq
M-\varepsilon.$ Thus, since $u$ is a $p-$harmonious function, we
have that

\begin{align*}
M-\varepsilon& \leq u(x_0)=
\dfrac{\alpha}{2}\left\{\max_{y\in \S(x_0)} u(y)
+ \min_{y\in \S(x_0)} u(y)\right\}+ \dfrac{\beta}{m}
\sum_{y\in \S(x_0)} u(y)\\
&\leq \left(\dfrac{\alpha}{2}+ \frac{m-1}{m}\beta\right)M
+  \left(\frac{\alpha}{2}+
\dfrac{\beta}{m}\right)\min_{y\in \S(x_0)} u(y).
\end{align*}

Then
\[
M-\frac{\varepsilon}{\delta}\leq \min_{y\in \S(x_0)} u(y)\leq u(y)
\]
for all $y\in \S(x_0).$

On the other hand, since $U$ satisfies PA, there exist
$l\in\{1,\dots,n\}$ and $(x_0, a_1, \dots, a_l)\in U$ where $a_k\in \{0, \dots, m-1\}$ for all $1\le
k\le l.$
Then, using that $x_1=(x_0,a_1)\in\S(x_0)$ and the above inequality, we get
\[
 M-\frac{\varepsilon}{\delta}\le u(x_1).
 \]
Similarly, we have
\[
M-\frac{\varepsilon}{\delta^k}\le u(x_k) \quad\forall k\in\{2,\dots,l\}
\]
where $x_k=(x_{k-1},a_k),$  $2\le k\le l.$ Then, using that
$x_l=(x_{l-1}, a_l)=(x_0, a_1, \dots, a_l)\in U,$ we have that
\[
M\delta^l\le \varepsilon.
\]
Let us now suppose that $M\ge 0.$ Using that $l\le n,$
$0<\delta<1$ and the above inequality, we have that
\[
M\delta^n\le \varepsilon \quad \forall \varepsilon>0,
\]
then $M=0.$ Thus we have that $M\le 0.$

In the same manner we can show that $N=\inf\{u(x)\colon
x\in\T\}\ge 0$. Therefore, $M=N=0,$ which proves the theorem.
\end{proof}

\begin{de}
Let $U$ be a subset of $\T$ such that $\T^n\setminus U\neq
\emptyset$ for all $n\in \N$. We define the sequence
$\{\rho_k(U)\}_{k\in\N}\subset\N$ as follows:
\[
\rho_1(U)\colon=\min\{n\in\N\colon \exists x\in\T^n\cap U\},
\]
and for all  $k\in\N_{\ge 2}$,
\[
\rho_k(U)\colon=\min\{n\in\N\colon \exists y\in\T^{\eta_{k-1}(U)}
\setminus U \mbox{ and } x\in\T^{\eta_{k-1}(U)+n}\cap U
\mbox{ s. t. } I_x\subset I_y\},
\]
where $$\eta_{k-1}(U)=\dis\sum_{j=1}^{k-1}\rho_{j}(U).$$ In
addition, for all $k\in\N_{\ge2},$ we define the sets
\[
\mathcal{A}_k(U)\colon=\left\{y\in\T^{\eta_{k-1}(U)}\setminus U
\colon I_{y}\cap I_{x_j}=\emptyset, \
x_j\in
\T^{\eta_{j}(U)}\cap U, \ \forall j\in\{1,\dots,k-1\}\right\}.
\]
We will write simply $\rho_k,$ $\eta_{k-1}$ and $\mathcal{A}_k$
when no confusion arises.
\end{de}

We can now formulate our main result.

\begin{te}\label{P1P2}
Let $U$ be a subset of $\T$ such that $\psi(U)$ is dense in
$[0,1]$, $\T^n\setminus U\neq \emptyset$ for all $n\in \N$ and
$U$ satisfies the following properties
\begin{enumerate}
    \item[{\rm(P1)}] There exists a unique $x_1\in U\cap\T^{\rho_1}.$
    \item[{\rm(P2)}] For all $k\in\N_{\ge2}$ and  for all $y\in\mathcal{A}_{k}$ there exists a
    unique  \mbox{$x\in\T^{\eta_{k-1} + \rho_{k}}\cap U$}  such that $I_{x}\subset I_{y}.$
\end{enumerate}
Then $U$ satisfies UCP  if only if
$$
\sum_{k=1}^{\infty} \delta^{\rho_k} = +\infty
$$
where $\delta=1-\theta,$  $\theta= \frac{\alpha}{2}+\frac{m-1}{m}\beta.$
\end{te}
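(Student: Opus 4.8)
The plan is to isolate one sharp quantitative estimate on a finite subtree and then feed it into the generational structure encoded by the $\rho_k$ and $\mathcal{A}_k$. Throughout write $\theta=\frac{\alpha}{2}+\frac{m-1}{m}\beta$ and $\delta=1-\theta=\frac{\alpha}{2}+\frac{\beta}{m}$, and set $M=\sup_{\T}u$. Since $v\mapsto -v$ preserves $p$-harmonicity (because $\max(-v)=-\min v$ and $\min(-v)=-\max v$), it suffices to prove $M\le 0$; the bound $\inf u\ge 0$ then follows by running the argument on $-u$. The first thing I would prove is a \emph{gap estimate}: if $v$ is $p$-subharmonious, if $v\le M$ at every descendant of a vertex $z$ lying $d$ levels below $z$, and if at least one such descendant has $v\le 0$, then $v(z)\le(1-\delta^{d})M$. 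This is just a one-branch computation with the deficit $D(w)=M-v(w)$ along the branch joining $z$ to the vanishing descendant: bounding $\max_{\S(w)}v\le M$ and using the small value $v(w')=M-D(w')$ one level lower as the minimum gives $v(w)\le M-\delta D(w')$, i.e. $D(w)\ge\delta D(w')$; iterating $d$ times yields $D(z)\ge \delta^{d}M$. The extremal configuration (all off-branch values $\equiv M$) shows this is sharp, so $1-\delta^{\rho_k}$ is the correct factor.

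For the implication $\sum_k\delta^{\rho_k}=+\infty\Rightarrow UCP$, I would push this estimate through the escape sets. Put $W_k=\sup\{u(y):y\in\mathcal{A}_k\}$, with the convention $\mathcal{A}_1=\{\emptyset\}$. The content of (P1)--(P2) together with the minimality in the definition of $\rho_k$ is that below any $y\in\mathcal{A}_k$ the set $U$ reappears for the first time exactly $\rho_k$ levels down, at a \emph{single} vertex, while every other descendant of $y$ at level $\eta_k$ lies in $\mathcal{A}_{k+1}$. Applying the gap estimate with $z=y$, $d=\rho_k$ and $M$ replaced by $\max(W_{k+1},0)$ gives $W_k\le(1-\delta^{\rho_k})\max(W_{k+1},0)$. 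Iterating from $k$ to $k+N$ and using $W_{k+N}\le M$ gives $W_k\le M\prod_{j=k}^{k+N-1}(1-\delta^{\rho_j})$, and since every tail of a divergent $\sum\delta^{\rho_j}$ forces the corresponding product to vanish, I obtain $W_k\le 0$ for all $k$. The same maximum principle $u(w)\le\max_{\S(w)}u$, iterated down to level $\eta_k$, gives $u\le 0$ on the intermediate gap vertices, and $u=0$ on $U$. To rule out $u>0$ strictly below a $U$-vertex $x$ I would argue that if $M$ were attained at a positive value then the maximum-inheritance lemma of Section~\ref{sect-prelim} would force $u\equiv M$ on the whole subtree $I_x$, contradicting the density of $\psi(U)$; otherwise one reapplies the escape argument inside the subtree rooted at $x$. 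Combining with the symmetric bound yields $u\equiv 0$.

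For the converse $\sum_k\delta^{\rho_k}<+\infty\Rightarrow U$ does not satisfy $UCP$, I would exhibit an explicit nonzero solution. Set $a_k=\prod_{j\ge k}(1-\delta^{\rho_j})$, a positive increasing sequence with limit $1$ precisely because the series converges. Define $u\equiv0$ on every $U$-vertex and on the whole subtree below it; $u=a_k$ on each $\mathcal{A}_k$-escape; and, inside each gap, the sharp branch profile $a_{k+1}(1-\delta^{j})$ of the gap estimate, with the off-branch subtrees constant $\equiv a_{k+1}$. The recursion $a_k=(1-\delta^{\rho_k})a_{k+1}$ is exactly what makes $u$ $p$-harmonious across each generation boundary; one checks $u$ takes values in $[0,1]$, vanishes on all of $U$ (every $U$-vertex is a generation vertex or sits below one), and satisfies $u(\emptyset)=a_1=\prod_{k\ge1}(1-\delta^{\rho_k})>0$.

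The step I expect to be the real obstacle is the control of $u$ on the region strictly below the $U$-vertices in the $UCP$ direction: the escape sets $\mathcal{A}_k$ by construction never enter these subtrees, so the chaining gives no information there directly, and one must argue that the structural and density hypotheses descend to each such subtree so that the relevant series still diverges. Making this self-similar bookkeeping rigorous—rather than the local algebra of the gap estimate, which is routine—is the delicate part; the entire mechanism hinges on the elementary equivalence
\[
\prod_{k}(1-\delta^{\rho_k})>0 \iff \sum_{k}\delta^{\rho_k}<+\infty .
\]
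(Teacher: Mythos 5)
Your two quantitative ingredients are correct where they apply, and they give a cleaner route through the part of the statement that can actually be proved. The gap estimate is right: bounding the maximum and the $m-1$ off-branch successors by $M$ and keeping the on-branch successor gives $v(w)\le\theta M+\delta v(w')$, hence $D(w)\ge\delta D(w')$ for the deficit $D=M-v$, and iterating $\rho_k$ times yields $v(z)\le(1-\delta^{\rho_k})M$. The chaining $W_k\le(1-\delta^{\rho_k})\max(W_{k+1},0)$ is also legitimate, with one ordering caveat: the gap estimate consumes a bound on the \emph{intermediate} successors along the branch, so the sub-maximum principle $u(w)\le\max_{y\in\S(w)}u(y)$, pushed down to level $\eta_k$ (where, by minimality of $\rho_k$ and (P2), every descendant of an escape vertex is either the unique $U$-vertex or lies in $\mathcal{A}_{k+1}$), must be invoked \emph{before} the chaining, not after it as in your sketch. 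Granting that, your argument shows that any bounded $p$-harmonious $u$ vanishing on $U$ vanishes on every $\mathcal{A}_k$, on every gap vertex and on $U$. This is genuinely different from the paper's Step 2, which normalizes $v(\emptyset)=1$ and runs a dichotomy down the branch to conclude $\sup v\ge M_k=\prod_{i=1}^{k}(1-\delta^{\rho_i})^{-1}\to+\infty$; your product chaining is the contrapositive of that argument and needs no normalization at the root, which is an advantage. Your converse construction is the paper's Step 1 function multiplied by $\prod_{k\ge1}(1-\delta^{\rho_k})$, so that direction coincides with the paper's.

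The obstacle you flag is a genuine gap, and neither of your repairs closes it. Repair (a) needs the supremum to be attained, which nothing guarantees. Repair (b) cannot be carried out, because the hypotheses do not descend to the subtree below a generation vertex: (P2) quantifies only over the sets $\mathcal{A}_k$, and every vertex $y$ with $I_y\subset I_{x_1}$ is excluded from every $\mathcal{A}_k$ by definition, so $U$ restricted to that subtree is constrained by nothing except density of its image, and density alone is not sufficient for UCP. In fact the defect lies in the statement, not merely in the proofs. Choose the escape structure with $\rho_k\equiv1$ (each escape vertex has exactly one successor in $U$, with the full subtree below that successor placed in $U$); then nothing placed inside the subtree below $x_1$ can lower the $\rho_k$'s (they are already minimal) or disturb (P1), (P2), density, or the divergence $\sum_k\delta^{\rho_k}=+\infty$. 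Now, under two successors of $x_1$, install two copies of the paper's Step 1 data with a \emph{convergent} series (generation vertices plus everything below them), and put the whole subtree below a third successor into $U$. If $h_0,h_1\ge0$ are the corresponding bounded Step 1 functions, the function equal to $a\,h_0$ and $-a\,h_1$ on the two copies, to $0$ on the third subtree, at $x_1$, and everywhere outside $I_{x_1}$, is bounded, $p$-harmonious (the successor values $a,-a,0,\dots,0$ balance in the equation at $x_1$, and zeros propagate upward), vanishes on $U$, and is not identically zero: all hypotheses of the theorem hold while UCP fails.

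You should also know that the paper's own proof contains exactly this gap, hidden in one line: Step 2 opens with ``multiplying $v$ by a suitable constant, we can assume that $v(\emptyset)=1$,'' which silently assumes $v(\emptyset)\neq0$; a bounded nonzero $v$ supported below $x_1$ with $v(\emptyset)=0$ is never ruled out, and by the construction above it cannot be. So your localization of the difficulty is exactly right, but it is not a matter of ``self-similar bookkeeping'': the forward implication requires an additional hypothesis, for instance that (P1)--(P2) together with divergence of the associated series hold recursively inside every subtree rooted at a vertex of $U$.
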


\begin{proof}
We will proceed in two steps.

\noindent {\bf Step 1.} First we will prove that if $U$ satisfies UCP,  then
$$
\sum_{k=1}^{\infty} \delta^{\rho_k} = +\infty.
$$
Arguing by contradiction, we suppose  that
$
\sum_{k=1}^{\infty} \delta^{\rho_k} < +\infty$.
By (P1), there exists a unique $x_1=(a_1,\dots,a_{\rho_1})\in U$
such that $\tau_{1i}=(a_1,\dots,a_i)\notin U$ for any
$1\le i<
\rho_1.$ We now construct a $p-$harmonious function $u$ such
that
$u=0$ in $U$ as follows:
\begin{align*}
&u(\emptyset)=1, \\
&u(a_1)=m_{11} =\displaystyle \min_{y\in S(\emptyset)}u(y)\\
&u(b_1, \dots, b_j)=M_{11} =
\displaystyle \max_{y\in \S(\emptyset)}u(y)
\mbox{ if } b_1\neq a_1\quad \forall 1\le j\le \rho_1,
\end{align*}
and for any $2\le i< \rho_1$
\begin{align*}
&u(\tau_{1i})=m_{1i} =\displaystyle \min_{y\in S(\tau_{1(i-1)})}u(y)  \\
&u(\tau_{1(i-1)},b_i,\dots,b_j)=M_{1i} =\displaystyle \max_{y\in \S(\tau_{1(i-1)})}u(y)
\mbox{ if } b_i\neq a_i\quad  \forall i\le j\le \rho_1.
\end{align*}
Since $x_1\in U$ and we need that $u=0$ in $U,$ we define
\[
u(x_1)=0=m_{1\rho_1}=\min_{y\in S(\tau_{1(\rho_1-1)})}u(y).
\]
We also take $u(y)=0$ for all $y\in\T$ such that $I_y\subset
I_{x_1}.$ Thus, in order for $u$ to be  a $p-$harmonious function,
we need to take $M_{11}, \dots, M_{1\rho_1}$  and $m_{11}, \dots,
m_{1(\rho_1-1)}$ such that
\begin{align*}
 1&= \frac{\alpha}{2}(M_{11} + m_{11})+ \frac{\beta}{m} \left((m-1)M_{11} +m_{11}\right),\\
m_{1i}&= \frac{\alpha}{2}(M_{1(i+1)} + m_{1(i+1)}) +\frac{\beta}{m}
\left((m-1)M_{1(i+1)} +m_{1(i+1)}\right)\quad \forall 1\le i <\rho_1.
\end{align*}
Then, we can observe that
\begin{align}
 1&= \frac{\alpha}{2}(M_{11} + m_{11})+\frac{\beta}{m} \left((m-1)M_{11} +m_{11}\right) \notag\\
 \label{uno} &= \left(\frac{\alpha}{2}+\frac{m-1}{m}\beta\right)M_{11}
 + \left(\frac{\alpha}{2}+\frac{\beta}{m} \right)m_{11}\\
 &= \theta M_{11}+ (1- \theta)m_{11}\notag
\end{align}
and in the same manner, we can show that
\begin{equation}\label{ig1}
m_{1i}= \theta M_{1(i+1)}+ (1-\theta)m_{1(i+1)} \quad \forall 1\leq i<\rho_1 .
\end{equation}
Now, using that $m_{1\rho_1}=0,$ we have that
\[
M_{1\rho_1}=\frac{m_{1(\rho_1-1)}}{\theta}.
\]
\begin{center}

\usetikzlibrary{decorations.pathreplacing}
\begin{tikzpicture} [font=\footnotesize,grow=down, 
level 1/.style={->, sibling distance=3em},
level 2/.style={->,sibling distance=3em}, level distance=1cm,
level 3/.style={->,sibling distance=3em}, level distance=1cm]
  \node(b){1}
        child { node {$M_{11}$}}
        child { node {$M_{11}$}}
       child { node {$m_{11}$}
					child { node {$M_{12}$}}
        			child { node {$M_{12}$}}
                     child { node(a){$m_{12}$}
																		}
				}    ;
\draw[dotted,black] (a) -- (3.5,-2.9);
  \node at (3.7,-3.1){$m_{1(\rho_1-1)}$}
        child { node {$\frac{m_{1(\rho_1-1)}}{\theta}$}}
        child { node {$\frac{m_{1(\rho_1-1)}}{\theta}$}}
       child { node {0}}    ;
\draw [decorate,decoration={brace,amplitude=5pt},xshift=-4pt,yshift=0pt] (5.5,0.18) -- (5.5,-4.3) node [black,midway,xshift=0.5cm] {$\rho_1$};

\end{tikzpicture}
\end{center}
If we take
\begin{equation}\label{ig2}
    M_{1i}= M_{1\rho_1}=\frac{m_{1(\rho_1-1)}}{\theta}=M_1, \quad\forall 1\le i\le \rho_1,
\end{equation}
by \eqref{ig1},  we obtain
\[
m_{1i}= m_{1(\rho_1-1)} + (1-\theta) m_{1(i+1)} \quad \forall  1\leq i<\rho_1.
\]
Using the above equality, we have
\[
 m_{1(\rho_1-2)}= m_{1(\rho_1-1)} + (1-\theta)m_{1(\rho_1-1)}= (2-\theta)m_{1(\rho_1-1)},
 \]
 and, for any $2< j\leq \rho_1-1,$
 \begin{equation}
\label{m11}m_{1(\rho_1-j)}= \left(\sum_{k=0}^{j-3} (1-\theta)^k
+ (1-\theta)^{j-2}(2-\theta)\right)m_{1(\rho_1-1)}.
\end{equation}
Thus, by \eqref{uno} and \eqref{m11}, we have that
\[
m_{1(\rho_1-1)}= \frac{1}{\displaystyle\sum_{k=0}^{\rho_1-3}
(1-\theta)^k+ (1-\theta)^{\rho_1-2}(2-\theta)}.
\]
In addition, since $M_1= \frac{m_{1(\rho_1-1)}}{\theta},$ we obtain
\[
M_1=  \frac{1}{\theta\left(\displaystyle\sum_{k=0}^{\rho_1-3}
(1-\theta)^k+ (1-\theta)^{\rho_1-2}(2-\theta)\right)}.
\]
Then, taking $\delta= 1-\theta,$ we get
\[
\theta\left(\displaystyle\sum_{k=0}^{\rho_1-3} (1-\theta)^k+ (1-\theta)^{\rho_1-2}(2-\theta)\right)
 = (1-\delta)\displaystyle\sum_{k=0}^{\rho_1-1} \delta^{k}
 = 1-\delta^{\rho_1}.
\]
Therefore,
$$
M_{1}= \frac{1}{ 1-\delta^{\rho_1}}.
$$

On the other hand,
\[
\mathcal{A}_2=\{y_j\}_{j=1}^{m^{\rho_1}-1}\mbox{ and }
u(y_j)=M_{11}\quad\forall  j\in\{1,\dots,m^{\rho_1-1}\}.
 \]
Furthermore, by (P2), for all $j\in\{1,\dots, m^{\rho_1}-1\}$
there exists a unique
\[
x_2^j=(y_j,a_{\rho_1+1}^j,\dots,a_{\rho_1+\rho_2}^j)\in\T^{\rho_1 + \rho_{2}}\cap U
\]
 with $\tau_{2i}^j=(y_j,a_{\rho_1+1}^j,\dots,a_{\rho_1+i}^j)\notin U$ for any $i\in\{1,\dots, \rho_2\}.$

Let $j\in\{1,\dots, m^{\rho_1}-1\}.$ We define $u$ as follows
\begin{align*}
&u(y_j,a_{\rho_1+1}^j)=m_{21} =\displaystyle \min_{y\in S(y_j)}u(y)\\
&u(y_1,b_{\rho_1+1}, \dots, b_{\rho_1+l})=M_{21} =\displaystyle
\max_{y\in \S(y_j)}u(y)\mbox{ if } b_{\rho_1+1}\neq a_{\rho_1+1}^j\quad \forall  l\in\{1, \dots \rho_2\},
\end{align*}
and for any $2\le i< \rho_2,$
\begin{align*}
&u(\tau_{2i}^j)=m_{2i} =\displaystyle \min_{y\in \S(\tau_{2(i-1)}^j)}u(y), \\
&u(\tau_{2(i-1)}^j,b_{\rho_1+i},\dots,b_{\rho_1+j})=M_{2i} =
\displaystyle
\max_{y\in \S(\tau_{2(i-1)}^j)}u(y)
\mbox{ if }  b_{\rho_1+l}\neq a_{\rho_1+i}^j\quad \forall l\in\{ i,\dots, \rho_2\}.
\end{align*}
Since $x_2^j\in U$ and we need that $u=0$ in $U,$ we define
\[
u(x_2^j)=0=m_{2\rho_2}=\min_{y\in \S(\tau_{2(\rho_2-1)}^j)}u(y).
\]
We also take $u(y)=0$ for all $y\in\T$ such that $I_y\subset I_{x_2^j}.$

Arguing as before, taking
\begin{equation*}
    M_{2i}= M_{2\rho_2}=\frac{m_{2(\rho_2-1)}}{\theta}=M_2, \quad\forall 1\le i\le \rho_2,
\end{equation*}
we get
\begin{align*}
m_{2(\rho_2-1)}=&\frac{M_{1}}{\displaystyle\sum_{k=0}^{\rho_2-3}
(1-\theta)^k+ (1-\theta)^{\rho_2-2}(2-\theta)},\\
m_{2(\rho_2-l)}=& \left(\sum_{k=0}^{l-3} (1-\theta)^k+ (1-\theta)^{j-2}
(2-\theta)\right)m_{2(\rho_2-1)} \ \forall
l\in\{2,\dots,\rho_2-1\},
\end{align*}
and
\[
M_{2}= \frac{M_{1}}{1-\delta^{\rho_2}}= \frac{1}{(1-\delta^{\rho_1})(1-\delta^{\rho_2})}.
\]

\medskip

By induction in $k,$ we construct  $u$ so that $u$ is
$p-$harmonious in $\T$ such that $u=0$ in $U,$ $u\neq0$ in $\T$
and
  \[
 M_k=\prod_{i=1}^{k} \frac{1}{1-\delta^{\rho_i}} \quad \forall k\in\N.
 \]

Since
\[
\sum_{k=1}^{\infty} \delta^{\rho_k} <+\infty,
\]
we have that
\[
\sum_{i=1}^{\infty} \frac{\delta^{\rho_k}}{1-\delta^{\rho_k}}<+\infty\Leftrightarrow
\sum_{k=1}^{\infty} \log\left(1+\frac{\delta^{\rho_k}}{1-\delta^{\rho_k}}\right)
=\sum_{k=1}^{\infty} \log\left(\frac{1}{1-\delta^{\rho_k}}\right)<+\infty.
\]
Thus,
 \[
 \prod_{i=1}^{\infty} \frac{1}{1-\delta^{\rho_k}}<+\infty.
 \]
 Therefore $u$ is a bounded $p-$harmonious function such that $u=0$ in $U$
  and $u\neq0$ in $\T$.
This is a contradiction.

\noindent {\bf Step 2.} We assume that
\[
\sum_{i=1}^{\infty} \delta^{\rho_i} = +\infty
\]
and we will prove that $U$ satisfies the $UCP$.

Suppose that there exists a $p-$harmonious function $v\neq0$ such
that $v=0$ in $U.$ We will prove that $v$ is unbounded.
Multiplying $v$ by a suitable constant, we can assume that
$v(\emptyset)=1.$ Let $u$ be defined as in the above step. First,
we need to show that
\begin{equation}\label{induc}
M_k\le\max\{v(y)\colon y\in\T^{\rho_k} \}\qquad\forall k\in\N.
\end{equation}
To this end, we observe that
\[
\theta M_1+(1-\theta)m_{11}=u(\emptyset)=1=v(\emptyset)\le
\theta \max_{y\in\S(\emptyset)}v(y)+(1-\theta)\min_{y\in\S(\emptyset)}v(y),
\]
then
\[
M_1\le\max_{y\in\S(\emptyset)}v(y)\quad \mbox{ or }\quad m_{11}\le \min_{y\in\S(\emptyset)}v(y).
\]

If $M_1\le\dis \max_{y\in\S(\emptyset)}v(y)$ then
$M_1\le \max\{v(y)\colon y\in\T^k \mbox{ with } k\in\{1,\dots,\rho_1\}\},$
and therefore $M_1\le\max\{v(y)\colon y\in\T^{\rho_1}\}.$

Now we consider the case $M_1>\dis \max_{y\in\S(\emptyset)}v(y)$
and $\dis m_{11}\le \min_{y\in\S(\emptyset)}v(y).$

By (P1), there
exists a unique $x_1=(a_1,\dots,a_{\rho_1})\in U$ such that
$\tau_{1i}=(a_1,\dots,a_i)\notin U$ for any $1\le i<
\rho_1.$ Then, since $m_{11}\le\dis
\min_{y\in\S(\emptyset)}v(y)\le v(a_1),$ we have that
\[
\theta M_1+(1-\theta)m_{12}=m_{11}\le v(a_{1})\le
\theta \max_{y\in\S(a_1)}v(y)+(1-\theta)\min_{y\in\S(a_1)}v(y),
\]
and then
\[
M_1\le\max_{y\in\S(a_1)}v(y)\quad \mbox{ or }\quad m_{12}\le \min_{y\in\S(a_1)}v(y).
\]

Again, if $M_1\le\dis \max_{y\in\S(a_1)}v(y),$ then we have that
$M_1\le\max\{v(y)\colon y\in\T^{\rho_1}\}.$ If $m_{12}\le\dis
\min_{y\in\S(a_1)}v(y)\le v(\tau_{12}),$ then we can prove as
before that
\[
M_1\le\max_{y\in\S(\tau_{12})}v(y)\quad \mbox{ or }\quad m_{13}\le \min_{y\in\S(\tau_{12})}v(y).
\]

In the same manner, using $\rho_1-1$ steps, we show that
\[
 M_1\le\max\{v(y)\colon y\in\T^{\rho_1}\} \mbox{ or }  m_{1(\rho-1)}\le \min_{y\in\S(\tau_{1(\rho-2)})}v(y).
\]
If $  m_{1(\rho-1)}\le\dis \min_{y\in\S(\tau_{1(\rho-2)})}v(y)\le v( \tau_{1(\rho-1)}),$  then
\[
\theta M_1=m_{1\rho_1}\le v(\tau_{1(\rho-1)})\le\theta
\max_{y\in\S(\tau_{1(\rho-1)})}v(y) +(1-\theta)\min_{y\in\S(\tau_{1(\rho-1)})}v(y).
\]
Since $x_1=(\tau_{1(\rho-1)},a_{\rho_1})\in U$ and $v=0$ in $U,$
$\dis\min_{y\in\S(\tau_{1(\rho-1)})}v(y)\le0$ and then
\[
M_1\le \max_{y\in\S(\tau_{1(\rho-1)})}v(y).
\]
Therefore
\[
 M_1\le\max\{v(y)\colon y\in\T^{\rho_1}\}.
\]
Then, by induction on $k,$ using (P2), we have that \eqref{induc}
holds.

Since
\[
\sum_{j=1}^{\infty} \delta^{\rho_j} = +\infty,
\]
we have that
\[
\lim_{k\to+\infty}M_k = \lim_{k\to+\infty}\prod_{i=1}^{k}\frac{1}{1-\delta^{\rho_i}}=+\infty.
\]
Therefore, by \eqref{induc}, $v$ is an unbounded.
The proof is complete.
\end{proof}

\subsection{Examples} Below we give some examples of sets
verifying (or not) the $UCP$.

\begin{ex} Let $U$ be given by
\[
U=\bigcup_{k\in\N} \T^{2^k}.
\]
Then it is clear that $U$ has the $UCP$.
\end{ex}

\begin{ex}
Let $m=3$ and $U$ be given by
\[
U= \left\{x \in\mathbb{T}_3\, \colon \,  x=(a_1,a_2,\dots,a_n), \
a_i\neq 1 ,\, \forall 1\le i\le n
\right\}.
\]
It is easy to see that $\psi(U)$ is a Cantor set and therefore
$U$ does not have the $UCP$.
\end{ex}

\begin{ex} Let $U$ be given by
$$
U= \left\{x \in\T\colon   x=(a_1,a_2,\dots,a_n), \ a_n=0
\right\}.
$$
Then, since $U$ satisfies (PA) with $n=1$, $U$ has the $UCP$.
\end{ex}

\begin{ex}
Let $U_1:=\{(0)\},$ $\rho_1:=1$
\[
U_{2n}:=\{x\in\T^{\mu_{2n-1}+2^{n+1}}\colon x=(y,a_1,\dots,a_{2^{n+1}}) \colon
y\in\T^{\mu_{2n-1}}\setminus U_{2n-1}\}, \rho_{2n}:=2^{n+1}
\]
\[
U_{2n+1}:=\{x\in\T^{\mu_{2n}+1}\colon x=(y,0) \colon
y\in\T^{\mu_{2n}}\setminus U_{2n}\}\mbox{ and } \rho_{2n+1}:=1
\]
for all $n\in\N,$ where $\mu_n:=\sum_{j=1}^n\rho_j$ for all
$n\in\N.$

Then $U=\bigcup_{n\in\N}U_n$ is dense and satisfies (P1)
and (P2). Since $
\sum_{j=1}^{+\infty}\rho_j=\infty,$ by Theorem \ref{P1P2}, $U$
satisfies the $UCP$.
\end{ex}

\end{document}